\let\oldthebibliography\thebibliography
\let\thebibliography\oldthebibliography
\title{Modified trigonometric integrators}
\author{Robert~I. McLachlan%
  \thanks{IFS, Massey University, Palmerston North, New Zealand 5301
    (\texttt{r.mclachlan@massey.ac.nz})} \and Ari Stern%
  \thanks{Department of Mathematics, Washington University in
    St.~Louis, Campus Box 1146, One Brookings Drive, Saint Louis,
    Missouri 63130 (\texttt{astern@math.wustl.edu})}}
\begin{document}

\maketitle

\begin{abstract}
  We study modified trigonometric integrators, which generalize the
  popular class of trigonometric integrators for highly oscillatory
  Hamiltonian systems by allowing the fast frequencies to be
  modified. Among all methods of this class, we show that the IMEX
  (implicit-explicit) method, which is equivalent to applying the
  midpoint rule to the fast, linear part of the system and the
  leapfrog (St\"ormer/Verlet) method to the slow, nonlinear part, is
  distinguished by the following properties: (i) it is symplectic;
  (ii) it is free of artificial resonances; (iii) it is the unique
  method that correctly captures slow energy exchange to leading
  order; (iv) it conserves the total energy and a modified oscillatory
  energy up to to second order; (v) it is uniformly second-order
  accurate in the slow components; and (vi) it has the correct
  magnitude of deviations of the fast oscillatory energy, which is an
  adiabatic invariant. These theoretical results are supported by
  numerical experiments on the Fermi--Pasta--Ulam problem and indicate
  that the IMEX method, for these six properties, dominates the class
  of modified trigonometric integrators.
\end{abstract}

\begin{keywords}
  trigonometric integrators, geometric integrators, highly oscillatory
  problems, IMEX, Fermi--Pasta--Ulam
\end{keywords}

\begin{AMS}
  65P10, 70K70
\end{AMS}

\section{Introduction}

\subsection{Overview} 
Over the past two decades, there has been considerable interest in
so-called \emph{geometric numerical integrators}, particularly
symplectic integrators for Hamiltonian systems
\citep{HaLuWa2006,LeRe2004}.  In contrast to general-purpose numerical
integrators, geometric integrators are designed especially to be
applied to systems with some additional underlying structure
(symmetries, invariants, etc.) that the algorithm must preserve
exactly, at least up to round-off error.

While there have been many successes in this area, the integration of
highly oscillatory Hamiltonian systems---which feature both stiff,
linear forces and soft, nonlinear forces---has remained persistently
difficult, due to the simultaneous presence of fast and slow time
scales.  Such systems are especially prevalent, for instance, in
molecular dynamics, where one must contend with strong, short-range
bonding forces, as well as weak, long-range electrostatic forces.

One of the main advances has been the development and analysis of
\emph{trigonometric integrators}, which are numerical methods designed
especially to integrate certain highly oscillatory systems.  However,
these methods have certain drawbacks: in particular, there is a
trade-off between numerical stability and consistency with respect to
certain dynamical features, such as the emergent multiscale phenomenon
of \emph{slow energy exchange} and the near-preservation of adiabatic
invariants.

In this paper, we show that \emph{modified trigonometric
  integrators}---that is, trigonometric integrators with modified
oscillatory frequencies---provide a way around this obstacle. Naively,
one might expect that perturbing the frequency would increase the
error, so it may seem counterintuitive to suggest that this can
actually improve numerical performance.  Yet, we show that this is
indeed the case: allowing for the frequency to be modified provides an
additional degree of freedom, which makes it possible to sidestep the
aforementioned trade-off between stability and multiscale structure
preservation. Specifically, we show that a particular, unique choice
of modified frequency yields an integrator that is both stable and
structure-preserving, and this is precisely the implicit-explicit
(IMEX) method of \citet{StGr2009}.

\subsection{The numerical challenge of fast oscillations}
\label{sec:fastOsc}

Consider a prototypical highly oscillatory problem, given by the
second-order equation
\begin{equation}
  \label{eqn:secondOrder}
  \ddot{q} + \Omega ^2 q = g ( q) ,
\end{equation} 
where $ q (t) \in \mathbb{R}^{d} $ is a trajectory, $ \Omega =
\bigl( \begin{smallmatrix} 0 & 0 \\ 0 & \omega I
\end{smallmatrix} \bigr) $ is a $ {d} \times {d} $ matrix with
constant fast frequency $ \omega \gg 1 $, and $ g \colon
\mathbb{R}^{d} \rightarrow \mathbb{R}^{d} $ is a conservative
nonlinear force, so that $ g = - \nabla U $ for some scalar potential
$ U \colon \mathbb{R}^{d} \rightarrow \mathbb{R} $.  The equation
\eqref{eqn:secondOrder} can also be written as a first-order system on
$ (q,p) \in \mathbb{R} ^{ 2 {d} } $,
\begin{equation} 
  \label{eqn:firstOrder}
  \begin{aligned}
    \dot{q} &= p ,\\
    \dot{p} &= - \Omega ^2 q + g (q) ,
  \end{aligned}
\end{equation} 
which are Hamilton's equations for the separable Hamiltonian
\begin{equation*}
  H ( q, p ) = \frac{1}{2} \lVert p \rVert ^2 + \frac{1}{2} \lVert
  \Omega q \rVert ^2 + U (q) .
\end{equation*}
Due to this underlying Hamiltonian structure, it is desirable to use a
symplectic integrator to obtain numerical solutions to
\eqref{eqn:secondOrder}--\eqref{eqn:firstOrder}.

One of the most popular, widely used symplectic integrators is the
\emph{St\"ormer/Verlet} (or \emph{leapfrog}) \emph{method}, which
discretizes \eqref{eqn:secondOrder} by the centered finite-difference
equation
\begin{equation}
  \label{eqn:stoermerVerlet}
  \frac{ q _{ n + 1 } - 2 q _n + q _{ n -1 } }{ h ^2 } + \Omega ^2 q
  _n = g ( q _n ) ,
\end{equation} 
where $h$ denotes the time step size.  An equivalent approximation for
the first-order system \eqref{eqn:firstOrder} is given by the
symmetric algorithm
\begin{align*}
  p _{ n + 1/2 } &= p _n + \frac{1}{2} h \bigl[ - \Omega ^2 q _n + g
  ( q _n ) \bigr] ,\\
  q _{ n + 1 } &= q _n + h p _{ n + 1/2 },\\
  p _{ n + 1 } &= p _{ n + 1/2 }  + \frac{1}{2} h \bigl[ - \Omega
  ^2 q _{n+1} + g ( q _{n+1} ) \bigr],
\end{align*} 
which is sometimes called the \emph{velocity Verlet method}.  Note
that, if \eqref{eqn:stoermerVerlet} is used to compute a numerical
trajectory $ ( \ldots, q _{ n -1 } , q _n , q _{ n + 1 } , \ldots ) $,
then we can still recover $ p _n $, after the fact, by taking $ p _n =
\frac{ q _{ n + 1 } - q _{ n -1 } }{ 2 h } $.  This method corresponds
to splitting the Hamiltonian into kinetic and potential components, $
H ( q, p ) = T (p) + V (q) $, where
\begin{equation*}
  T (p) = \frac{1}{2} \lVert p \rVert ^2, \qquad V (q) = \frac{1}{2} \lVert
  \Omega q \rVert ^2 + U (q),
\end{equation*}
and alternating between the purely kinetic flow of $ T (p) $ and the
purely potential flow of $ V (q) $.  This is an example of a
\emph{splitting method} (see~\citet{McQu2002}), and since the flows of
$T (p)$ and $V (q) $ are each Hamiltonian, the composition $ ( q _n ,
p _n ) \mapsto ( q _{ n + 1 }, p _{ n + 1 } ) $ is a symplectic map.

Despite these desirable geometric properties, however, the
St\"ormer/Verlet method cannot integrate highly oscillatory systems
efficiently.  As an explicit method, it remains stable only for time
steps on the order $ h = \mathcal{O} ( \omega ^{-1} ) $; in
particular, when $ g = 0 $, we have the linear stability condition $ h
\omega \leq 2 $.  Therefore, to integrate over a time interval of
fixed size, the method requires $ \mathcal{O} (\omega) $ time steps,
and hence $ \mathcal{O} ( \omega ) $ evaluations of the nonlinear
force $g$, which becomes prohibitively expensive for large $\omega$.

A typical implicit method encounters similar difficulties.  For
example, the \emph{implicit midpoint method} discretizes
\eqref{eqn:firstOrder} by the one-step algorithm
\begin{align*}
  q _{ n + 1 } &= q _n + h \Bigl( \frac{ p _n + p _{ n + 1 } }{ 2 }
  \Bigr) ,\\
  p _{ n + 1 } &= p _n + h \biggl[ - \Omega ^2 \Bigl( \frac{ q _n + q
    _{ n + 1 } }{ 2 } \Bigr) + g \Bigl( \frac{ q _n + q _{ n + 1 } }{
    2 } \Bigr) \biggr],
\end{align*}
which is equivalent to the centered finite-difference scheme
\begin{equation}
\label{eqn:midpoint}
\begin{multlined}[4in]
  \frac{ q _{ n + 1 } - 2 q _n + q _{ n -1 } }{ h ^2 } + \Omega ^2
  \Bigl( \frac{ q _{ n + 1 } + 2 q _n + q _{ n -1 } }{ 4 } \Bigr) \\
  = \frac{1}{2} g \Bigl( \frac{ q _n + q _{ n + 1 } }{ 2 } \Bigr) +
  \frac{1}{2} g \Bigl( \frac{ q _{ n -1 }+ q _n }{ 2 } \Bigr)
\end{multlined}
\end{equation}
for the second-order equation \eqref{eqn:secondOrder}.  While this
method is linearly unconditionally stable, it requires a nonlinear
solve at each time step, since $ q _{ n + 1 } $ appears inside the
nonlinear force $g$.  However, a numerical solver (e.g., Newton's
method) will require several evaluations of $g$ at each time step, so
this method is also computationally expensive.

The failure of these traditional symplectic integrators has motivated
the development of numerical methods designed especially for highly
oscillatory Hamiltonian systems.  The goal of this research has been
to obtain efficient, explicit integrators that are stable and accurate
for large time steps $h$.  By ``large time steps,'' we mean that the
step size can be chosen independently of the fast frequency $\omega$,
so that $ h ^{-1} = \mathcal{O} (1) $ as $ \omega \rightarrow \infty
$.  Therefore, in sharp contrast to the St\"ormer/Verlet method, such
integrators require only $ \mathcal{O} (1) $ evaluations of the
nonlinear force, rather than $ \mathcal{O} (\omega) $.

\subsection{Trigonometric integrators} Trigonometric integrators are
designed to integrate \eqref{eqn:secondOrder}--\eqref{eqn:firstOrder}
exactly when the nonlinear force $g$ vanishes, i.e., when the system
reduces to a harmonic oscillator.  Let $\psi$ and $\phi$ be a pair of
even, real-valued \emph{filter functions} satisfying $ \psi (0) = \phi
(0) = 1 $, and denote $ \Psi = \psi ( h \Omega ) $, $ \Phi = \phi ( h
\Omega ) $, and $ g _n = g ( \Phi q _n ) $.  Then the
\emph{trigonometric integrator} corresponding to the filters $ \psi $,
$\phi$, is defined by the difference equation
\begin{equation}
  \label{eqn:trigIntegrator}
  q _{ n + 1 } - 2 \cos ( h \Omega ) q _n + q _{ n -1 } = h ^2 \Psi g _n .
\end{equation} 
We can extend this to a symmetric, one-step method for $ (q,p) \in
\mathbb{R} ^{ 2 {d} } $ by introducing a new filter function $ \psi _1
$, which satisfies $ \psi (\xi) = \operatorname{sinc} (\xi) \psi _1
(\xi) $, and taking $ \Psi _1 = \psi _1 ( h \Omega ) $.  We then
obtain a velocity Verlet-like algorithm,
\begin{align*}
  p _n ^+ &= p _n + \frac{1}{2} h \Psi _1 g _n ,\\
  \begin{pmatrix}
    q _{ n + 1 } \\
    p _{ n + 1 } ^-
  \end{pmatrix} &=
  \begin{pmatrix}
    \cos (h \Omega) & h \operatorname{sinc} (h \Omega) \\
    - \Omega \sin (h \Omega) & \cos (h \Omega)
  \end{pmatrix}
  \begin{pmatrix}
    q _n \\
    p _n ^+
  \end{pmatrix},\\
  p _{ n + 1 } &= p _{ n + 1 } ^- + \frac{1}{2} h \Psi _1 g _{ n + 1 } .
\end{align*} 
Similarly to St\"ormer/Verlet, if \eqref{eqn:trigIntegrator} is used
to compute a numerical trajectory $ ( \ldots, q _{ n -1 } , q _n , q
_{ n + 1 } , \ldots ) $, then $ p _n $ can be recovered by taking $
\operatorname{sinc} ( h \Omega ) p _n = \frac{ q _{ n + 1 } - q _{ n
    -1 } }{ 2 h } $, as long as $ \operatorname{sinc} ( h \omega )
\neq 0 $. Whatever the choice of $ \psi $ and $\phi$, these methods
reduce to St\"ormer/Verlet in the case $ \omega = 0 $, and to the
exact solution of the harmonic oscillator in the case $ g = 0 $.

One of the simplest trigonometric integrators is the
\emph{Deuflhard/impulse method}\footnote{Christian Lubich pointed out
  to us that, although the Deuflhard and impulse methods are distinct
  in general, they happen to coincide in the case of highly
  oscillatory problems of this type.  This is the reason behind the
  slash in the name ``Deuflhard/impulse.''}, which corresponds to the
choice of filters $ \psi (\xi) = \operatorname{sinc} (\xi) $ (i.e., $
\psi _1 = 1 $) and $ \phi = 1 $.  In this case, the trigonometric
integrator corresponds to a splitting method: the Hamiltonian is split
as $ H (q,p) = H_{\text{fast}} (q,p) + U (q) $, where $
H_{\text{fast}} (q,p) = \frac{1}{2} \bigl( \lVert p \rVert ^2 + \lVert
\Omega q \rVert ^2 \bigr) $.  While the Deuflhard/impulse method has
many desirable properties, it has one major drawback: spurious
numerical resonances arise when $ h \omega $ is close to a nonzero
integer multiple of $\pi$, causing a loss of stability and accuracy.
This resonance instability causes serious problems whenever $ h \omega
\geq \pi $, so effectively, the method forces us to choose $ h =
\mathcal{O} ( \omega ^{-1} ) $, just like the St\"ormer/Verlet method,
making it unsuitable for integration with long time steps. (Similar
resonance phenomena also plague other impulse-type methods, including
multiple-time-stepping methods, cf.~\citet{BiSk1993}.)

In contrast to the ``unfiltered'' Deuflhard/impulse method, other
trigonometric integrators use $\phi$ to filter (or ``mollify'') the
slow force, so as to lessen the problem of resonance
instability. \emph{Mollified impulse methods} allow the filter $\phi$
to be chosen arbitrarily, and then take $ \psi (\xi) =
\operatorname{sinc} (\xi) \phi (\xi) $, i.e., $ \psi _1 = \phi $.
Like the Deuflhard/impulse method (which corresponds to the special
case $\phi = 1 $), these are also Hamiltonian splitting methods, where
the potential $U$ is replaced by the mollified potential $ \overline{
  U } (q) = U ( \Phi q ) $. Consequently, mollified impulse methods
are symplectic; in fact, it is straightforward to show that a
trigonometric integrator is symplectic if and only if $ \psi (\xi) =
\operatorname{sinc} (\xi) \phi (\xi) $, i.e., it is a mollified
impulse method.

Various trigonometric integrators, corresponding to different choices
of the filter functions $\psi$ and $\phi$, have appeared in the
literature, and are summarized in \autoref{tab:trigMethods}.  The
alphabetical labels for these methods (A--E and G) follow the
convention of \citet{HaLuWa2006}, which has since been adopted by
several other authors.  (We have omitted ``method F,'' from
\citep{HaLuWa2006}, which is a two-force method rather than a
trigonometric integrator.)  Of these, note that only method B
(Deuflhard/impulse) and method C (mollified impulse) satisfy the
symplecticity condition $ \psi (\xi) = \operatorname{sinc} (\xi) \phi
(\xi) $.

\begin{table}
  \begin{tabular}{clll}
    Method & $ \psi (\xi) $ & $ \phi (\xi) $ & Reference \\
    \hline\rule{0ex}{2.5ex}%
    A & $ \operatorname{sinc} ^2 (\frac{1}{2} \xi)
    $ & $  1 $ & \citet{Gautschi1961}\\
    B & $ \operatorname{sinc} (\xi) $ & $ 1 $ &
    \citet{Deuflhard1979}\\
    C & $ \operatorname{sinc} ^2 (\xi) $ & $ \operatorname{sinc} (\xi)
    $ & \citet{GaSaSk1999}\\
    {D} & $ \operatorname{sinc} ^2 (\frac{1}{2} \xi) $ &
    $ \operatorname{sinc} (\xi) \bigl(1 + \frac{ 1 }{ 3 } \sin ^2
    (\frac{1}{2} \xi) \bigr) $ & \citet{HoLu1999} \\
    E & $ \operatorname{sinc} ^2 (\xi) $ & $ 
    1 $ & \citet{HaLu2000} \\
    G & $ \operatorname{sinc} ^3 (\xi) $ & $ 
    \operatorname{sinc} (\xi) $ & \citet{GrHo2006}    
  \end{tabular}
  \vskip 1ex 
  \caption{Filter functions corresponding to various 
    trigonometric integrators.\label{tab:trigMethods}}
\end{table}

\subsection{Modulated Fourier expansion and slow exchange}
\label{sec:slowExchange}

The \emph{modulated Fourier expansion} is a powerful technique for
analyzing the dynamics of highly oscillatory systems, as well as the
numerical behavior of trigonometric integrators for such systems. We
give only a brief summary here; for a detailed treatment, see
\citet{HaLu2000,HaLuWa2006}.

Suppose that $ q (t) $ is a solution of the highly oscillatory system
\eqref{eqn:secondOrder}. To separate out its fast- and slow-scale
features, we approximate $ q (t) $ asymptotically by a trajectory $ x
(t) $ of the form
\begin{equation}
  \label{eqn:principalMFEexact}
  x (t) = y (t) + e ^{ i \omega t } z (t) + e ^{ - i \omega t }
  \overline{ z } (t) ,
\end{equation} 
where $ y (t) $ is real-valued and $ z (t) $ is complex-valued.
Assuming that the energy of $ x (t) $ is bounded on the time interval
of interest, this implies that $ z (t) = \mathcal{O} ( \omega ^{-1} )
$.  Next, we can decompose $ x = ( x _0, x _1 ) $, $ y = ( y _0, y _1
) $, and $ z = ( z _0, z _1 ) $, according to the blocks of $\Omega$.
Plugging $ x (t) $ into \eqref{eqn:secondOrder}, Taylor expanding $ g
(x) $ around $y$, and matching the terms on both sides up to $
\mathcal{O} ( \omega ^{ -3 } ) $ yields the system of equations
\begin{equation}
\label{eqn:coefficients}
\begin{aligned}
  \ddot{y} _0 &= g _0 \bigl( y _0 , \omega ^{ - 2 } g _1 ( y _0, 0 )
  \bigr) + \frac{ \partial ^2 g _0 }{ \partial x _1 ^2 } ( y _0, 0 ) (
  z _1 , \overline{z} _1 ) ,\\
  2 i \omega \dot{z} _1 &= \frac{ \partial g _1 }{ \partial x _1 } ( y
  _0, 0 ) z _1 .
\end{aligned}
\end{equation}
(The $ y _1 $ and $ z _0 $ components can both be eliminated up to
this order of accuracy.)  Here, $ y _0 $ evolves on the time scale $
\mathcal{O} (1) $ and describes the non-stiff dynamics of the system,
while $ z _1 $ evolves on the time scale $ \mathcal{O} ( \omega ) $
and corresponds to a multiscale phenomenon known as \emph{slow energy
  exchange}.  If $ I _j = \frac{1}{2} p _{ 1, j } ^2 + \frac{1}{2}
\omega ^2 q _{ 1, j } ^2 $ is the energy in the $j$th stiff component
of the system, then it can be shown that, up to $ \mathcal{O} ( \omega
^{-1} ) $, we have $ I _j \approx 2 \omega ^2 \lvert z _{ 1, j }
\rvert ^2 $.  Here, we have split $ q = ( q _0 , q _1 ) $ and $ p = (
p _0 , p _1 ) $ into non-stiff and stiff blocks, as above, and $ q _{
  1, j } $, $ p _{ 1, j } $, $ z _{ 1, j } $ denote the $j$th
components of the corresponding vectors $ q _1 $, $ p _1 $, $ z _1 $.
It follows that the evolution of $ z _1 $ describes the slow exchange
of energy between the stiff components, coupled through the nonlinear
force.  Moreover, the total stiff energy $ I = \sum _j I _j \approx 2
\omega ^2 \lVert z _1 \rVert ^2 $ is an adiabatic invariant, since
\begin{equation*}
  \frac{\mathrm{d}}{\mathrm{d}t} 2 \omega ^2 \lVert z _1
  \rVert ^2 = 4 \omega ^2 \operatorname{Re} \langle z _1,
  \dot{z} _1 \rangle = \mathcal{O}  ( \omega ^{-1} ) ,
\end{equation*} 
and therefore $ \dot{ I } = \mathcal{O} ( \omega ^{-1} ) $. Hence,
deviations in $I$ are also $ \mathcal{O} ( \omega ^{-1} ) $ over a
fixed time interval.

A similar technique can be applied to analyze numerical behavior. For
a trigonometric integrator with time step size $h$, the numerical
trajectory $ q _n $ can be approximated asymptotically by $ x _h ( n h
) $, where
\begin{equation}
  \label{eqn:principalMFEnumerical}
  x _h (t) = y _h (t) + e ^{ i \omega t } z _h (t) + e ^{ -i \omega
    t } \overline{ z } _h (t) .
\end{equation}
As we did for the continuous dynamics, we can plug this ansatz into
\eqref{eqn:trigIntegrator} and match terms, obtaining a system of
equations,
\begin{equation}
\label{eqn:trigCoefficients}
\begin{aligned}
  \delta _h ^2 y _{ h, 0 } &= g _0 \bigl( y _{ h,0 } , \gamma \omega
  ^{ - 2 } g _1 ( y _{ h, 0 } , 0 ) \bigr) + \beta \frac{ \partial ^2
    g _0 }{ \partial x _1 ^2 } ( y _{ h,0 }, 0 ) ( z _{ h,1 },
  \overline{z} _{ h, 1 } ) ,\\
  2 i \omega \dot{z} _{ h, 1 } &= \alpha \frac{ \partial g _1
  }{ \partial x _1 } ( y _{ h, 0 } , 0 ) z _{ h, 1 } ,
\end{aligned}
\end{equation}
which hold up to $ \mathcal{O} ( \omega ^{ -3} ) $.  Here, $ \delta _h
^2 $ denotes the second finite-difference operator, defined by
\begin{equation*}
  \delta _h ^2 y _{h,0}  (t) = \frac{ y _{h,0} (t+h) - 2 y _{h,0} (t)
    + y _{h,0} ( t - h )}{ h ^2 } ,
\end{equation*} 
while the constants $\alpha$, $\beta$, and $\gamma$ are given by
\begin{equation*}
  \alpha = \frac{ \psi ( h \omega ) \phi ( h \omega ) }{
    \operatorname{sinc} (h \omega ) } , \qquad \beta = \phi (h \omega
  ) ^2 , \qquad \gamma = \frac{ \psi ( h \omega ) \phi ( h \omega ) }{
    \operatorname{sinc} ^2 ( \frac{1}{2} h \omega ) } .
\end{equation*} 
Comparing \eqref{eqn:coefficients} and \eqref{eqn:trigCoefficients},
it follows that the dynamics of $ z _{ h , 1 } $ are consistent with
those for $ z _1 $ only if $ \alpha = 1 $.  Moreover, to fully capture
the coupled dynamics between $ y _0 $ and $ z _1 $, one would also
require $ \beta = 1 $ and $ \gamma = 1 $.

Of the methods listed in \autoref{tab:trigMethods}, only Method B, the
Deuflhard/impulse method, satisfies $ \alpha = 1 $.  However, as
discussed previously, the resonance instability of this method makes
it practically impossible to take large time steps.  Hence, we cannot
hope to model the slow-energy exchange accurately, using a
trigonometric integrator, unless we sacrifice either stability or
efficiency.  A more fundamental problem is that $ \alpha \neq \gamma $
in general, so even if we are willing to make the aforementioned
trade-off, it is impossible for a trigonometric integrator to satisfy
$ \alpha = \beta = \gamma = 1 $.

Multi-force methods provide one way around this obstacle, but as their
name suggests, they require multiple evaluations of the slow force $g$
per time step.  However, we show that there is another way around this
obstacle: by modifying the fast frequency $\omega$, it is possible for
a stable, efficient method to achieve $ \alpha = \beta = \gamma = 1 $,
with only a single evaluation of $g$ per time step.

\subsubsection*{Remark} Strictly speaking, \eqref{eqn:principalMFEexact}
and \eqref{eqn:principalMFEnumerical} contain only the principal
(i.e., leading-order) terms of the modulated Fourier expansion. While
the constant and $ e ^{ \pm i \omega t } $ terms are sufficient to
describe slow energy exchange, other properties---including long-time
conservation of the total and oscillatory energies---require further
expansion in the higher-order terms $ e ^{ \pm 2 i \omega t } $, $ e
^{ \pm 3 i \omega t } $, etc. Again, we refer the reader to
\citet{HaLu2000,HaLuWa2006} for a full account.

\subsection{Overview of results}
We begin, in \autoref{sec:modTrig}, by defining modified trigonometric
integrators, and by giving a few examples.  We then show how the
modulated Fourier expansion can be applied to these methods, and we
use this to derive consistency conditions for slow energy exchange.
The main result of this section, \autoref{thm:IMEX}, shows that
modified trigonometric integrators can indeed satisfy the full
consistency condition $ \alpha = \beta = \gamma = 1 $; in fact, we
prove that there is a unique modified trigonometric integrator which
does so, coinciding with the implicit-explicit (IMEX) method of
\citet{StGr2009}. Furthermore, \autoref{thm:energy} shows that IMEX
conserves total energy and a modified oscillatory energy up to $
\mathcal{O} ( h ^2 ) $.

\autoref{sec:experiments} presents the results of several numerical
experiments for the widely-studied and dynamically rich
Fermi--Pasta--Ulam test problem.  We compare the numerical behavior of
the trigonometric integrators listed in \autoref{tab:trigMethods} with
that of the IMEX modified trigonometric integrator.  These experiments
demonstrate the trade-off between stability, consistency, and accuracy
inherent to standard trigonometric integrators. By contrast, the IMEX
modified method performs well in all of these experiments, without any
observed trade-off, as predicted by \autoref{thm:IMEX} and
\autoref{thm:energy}.

Finally, one of these numerical experiments reveals that, although the
total oscillatory energy $ I = \sum _j I _j $ is well-conserved by all
of the methods considered (both modified and unmodified), the
integrators vary considerably with respect to the magnitude of
deviations in this adiabatic invariant.  In \autoref{sec:deviations},
we analyze the deviation in total oscillatory energy by examining
higher-order terms in the modulated Fourier expansion.  This analysis
provides a theoretical explanation for the behavior observed in the
numerical experiments.

\paragraph{Relationship to previous work} As mentioned above, the IMEX
method was introduced for highly oscillatory problems in
\citet{StGr2009}. This earlier paper focused primarily on the
variational, symplectic, and stability properties of IMEX, and on its
comparison with multiple-time-stepping methods (as opposed to
trigonometric integrators). It was observed that IMEX can be viewed as
a Deuflhard/impulse method with modified frequency, implying the
partial consistency condition $ \alpha = 1 $ for slow energy exchange
\citep[Theorem 4.1]{StGr2009}. \autoref{thm:IMEX} is a substantial
strengthening of this consistency result; the other results and
numerical experiments presented in the current paper are independent
of those in \citep{StGr2009}.

\section{Modified trigonometric integrators}
\label{sec:modTrig}

\subsection{Basic definitions} A \emph{modified trigonometric
  integrator} for the highly oscillatory system
\eqref{eqn:secondOrder} is defined by the second-order difference
equation
\begin{equation}
\label{eqn:modTrig}
  q _{ n + 1 } - 2 \cos ( h \widetilde{ \Omega } ) q _n + q _{ n -1 }
  = h ^2 \Psi  g _n ,
\end{equation} 
where $ \widetilde{ \Omega } = \bigl( \begin{smallmatrix} 0 & 0 \\ 0 &
  \widetilde{ \omega } I
\end{smallmatrix} \bigr) $ and $ \widetilde{ \omega } $ is called the
\emph{modified frequency}.  If $\psi$ and $\phi$ are even, real-valued
filter functions satisfying $ \psi (0) = \phi (0) = 1 $, we now take $
\Psi = \psi ( h \widetilde{ \Omega } ) $ and $ \Phi = \phi ( h
\widetilde{ \Omega } ) $, while as before, we denote $ g _n = g ( \Phi
q _n ) $.

Although $ \Omega $ is generally singular, we commit a slight abuse of
notation by taking $ \Omega ^{-1} \widetilde{ \Omega } $ to mean the
matrix $ \bigl( \begin{smallmatrix}
  I & 0 \\
  0 & (\widetilde{ \omega } / \omega ) I
\end{smallmatrix} \bigr) $.  Letting $ \Psi = \Omega ^{-1} \widetilde{
  \Omega } \operatorname{sinc} ( h \widetilde{ \Omega } ) \Psi _1 $,
we consider the following symmetric, one-step algorithm:
\begin{align*}
  p _n ^+ &= p _n + \frac{1}{2} h \Psi _1 g _n ,\\
  \begin{pmatrix}
    q _{ n + 1 } \\
    p _{ n + 1 } ^-
  \end{pmatrix} &=
  \begin{pmatrix}
    \cos (h \widetilde{ \Omega}) & h \Omega ^{-1} \widetilde{ \Omega }
    \operatorname{sinc} (h
    \widetilde{ \Omega}) \\
    - \Omega \sin (h \widetilde{ \Omega}) & \cos (h \widetilde{
      \Omega})
  \end{pmatrix}
  \begin{pmatrix}
    q _n \\
    p _n ^+
  \end{pmatrix},\\
  p _{ n + 1 } &= p _{ n + 1 } ^- + \frac{1}{2} h \Psi _1 g _{ n + 1 } .
\end{align*} 
As with standard trigonometric integrators, this method is symplectic
when the filters satisfy $ \psi _1 = \phi $; since this gives $ \Psi
_1 g _n = - \nabla \overline{ U } ( q _n ) $, by the chain rule, and
hence the integrator corresponds to a splitting method for the
modified Hamiltonian.  This symplecticity condition can also be
written as $ \psi ( h \widetilde{ \omega } ) = \omega
  ^{-1} \widetilde{ \omega } \operatorname{sinc} ( h \widetilde{
    \omega } ) \phi ( h \widetilde{ \omega } ) $.

If \eqref{eqn:modTrig} is used to compute a numerical trajectory $ (
\ldots, q _{ n -1 } , q _n , q _{ n + 1 }, \ldots ) $, then the $ p _n
$ can be recovered by taking $ \Omega ^{-1} \widetilde{ \Omega }
\operatorname{sinc} ( h \widetilde{ \Omega } ) p _n = \frac{ q _{ n +
    1 } - q _{ n -1 } }{ 2 h } $, as long as $ \operatorname{sinc} ( h
\widetilde{ \omega } ) \neq 0 $; note that this is slightly different
from the previous expression for a standard trigonometric integrator.
If we define the modified momentum $ \widetilde{ p } _n = \Omega ^{-1}
\widetilde{ \Omega } p _n $, then it follows that $
\operatorname{sinc} ( h \widetilde{ \Omega } ) \widetilde{ p } _n =
\Omega ^{-1} \widetilde{ \Omega } \operatorname{sinc} ( h \widetilde{
  \Omega } ) p _n = \frac{ q _{ n + 1 } - q _{ n -1 } }{ 2 h } $.
Hence, the numerical algorithm in $ ( q, \widetilde{ p } ) $
corresponds to
\begin{align*}
  \widetilde{ p } _n ^+ &= \widetilde{ p } _n + \frac{1}{2} h
  \widetilde{ \Psi } _1 g _n ,\\
  \begin{pmatrix}
    q _{ n + 1 } \\
    \widetilde{ p } _{ n + 1 } ^-
  \end{pmatrix} &=
  \begin{pmatrix}
    \cos (h \widetilde{ \Omega}) & h \operatorname{sinc} (h
    \widetilde{ \Omega}) \\
    - \widetilde{ \Omega } \sin (h \widetilde{ \Omega}) & \cos (h
    \widetilde{ \Omega})
  \end{pmatrix}
  \begin{pmatrix}
    q _n \\
    \widetilde{ p } _n ^+
  \end{pmatrix},\\
  \widetilde{ p } _{ n + 1 } &= \widetilde{ p } _{ n + 1 } ^- +
  \frac{1}{2} h \widetilde{ \Psi } _1 g _{ n + 1 } ,
\end{align*} 
where $ \widetilde{ \Psi } _1 = \Omega ^{-1} \widetilde{ \Omega } \Psi
_1 $, which implies $ \Psi = \operatorname{sinc} ( h \widetilde{
  \Omega } ) \widetilde{ \Psi } _1 $.  This is precisely a standard
trigonometric integrator for the modified frequency $ \widetilde{
  \omega } $.

\subsection{Examples} The first example of a modified trigonometric
integrator is simply a standard trigonometric integrator, where we
make the trivial choice of modified frequency $ \widetilde{ \omega } =
\omega $.

A more interesting, nontrivial example is the St\"ormer/Verlet
method. Observe that the finite-difference scheme
\eqref{eqn:stoermerVerlet} can be rewritten as
\begin{equation*}
  q _{ n + 1 } - 2 ( I - \tfrac{1}{2} h ^2 \Omega ^2 ) q _n + q _{ n -1 } = h
  ^2 g ( q _n ) .
\end{equation*}
If we choose $ \widetilde{ \omega } $ such that $ \sin ( \frac{1}{2} h
\widetilde{ \omega } ) = \frac{1}{2} h \omega $, then it follows that
$ 1 - \tfrac{1}{2} h ^2 \Omega ^2 = 1 - 2 \sin ^2 ( \tfrac{1}{2} h
\widetilde{ \Omega } ) = \cos ( h \widetilde{ \Omega } ) $, and therefore
\begin{equation*}
  q _{ n + 1 } - 2 \cos ( h \widetilde{ \Omega } )  q _n + q _{ n -1 } = h
  ^2 g ( q _n ) .
\end{equation*}
Hence, the St\"ormer/Verlet method is a modified trigonometric
integrator with the above choice of $ \widetilde{ \omega } $, and with
the filters $ \psi = \phi = 1 $. It should be observed, though, that $
\sin ( \frac{1}{2} h \widetilde{ \omega } ) = \frac{1}{2} h \omega $
has no solution when $ \frac{1}{2} h \omega > 1 $; we are again
limited by the linear stability condition $ h \omega \leq 2 $, as in
\autoref{sec:fastOsc}.

Note that, although \eqref{eqn:modTrig} coincides with
St\"ormer/Verlet, the approximation of $ p _n $ is different from that
used in velocity Verlet.  From $ \sin ( \frac{1}{2} h \widetilde{
  \omega } ) = \frac{1}{2} h \omega $, we obtain
\begin{equation*}
  \frac{ \widetilde{ \omega } }{ \omega } \operatorname{sinc} ( h
  \widetilde{ \omega } ) = \frac{ h \widetilde{ \omega }
    \operatorname{sinc} ( h \widetilde{ \omega } ) }{ h \omega } =
  \frac{ \sin ( h \widetilde{ \omega } ) }{ h \omega } = \frac{ 2 \sin
    (\tfrac{1}{2} h \widetilde{ \omega } ) \cos (\tfrac{1}{2} h
    \widetilde{ \omega } ) }{ h \omega } = \cos     (\tfrac{1}{2} h
  \widetilde{ \omega } ).
\end{equation*} 
Substituting this into $ \Omega ^{-1} \widetilde{ \Omega }
\operatorname{sinc} ( h \widetilde{ \Omega } ) p _n = \frac{ q _{ n +
    1 } - q _{ n -1 } }{ 2 h } $, it follows that the momentum $ p _n
$ satisfies $ \cos ( \frac{1}{2} h \widetilde{ \Omega } ) p _n =
\frac{ q _{ n + 1 } - q _{ n -1 } }{ 2 h } $.

A third, particularly interesting example is the
\emph{implicit-explicit} (or \emph{IMEX}) integrator first suggested
by \citet{ZhSk1997} as a ``cheap'' version of the implicit midpoint
method, and more recently introduced and analyzed by \citet{StGr2009}
as an attractive method in its own right for highly oscillatory
problems.  (An essentially similar method has also been applied to the
linear Schr\"odinger equation, cf.~\citet{DeFa2009}.)  Combining the
left-hand side of the implicit midpoint method \eqref{eqn:midpoint}
with the right-hand side of the St\"ormer/Verlet method
\eqref{eqn:stoermerVerlet}, and multiplying both sides by $ h ^2 $, we
get the IMEX method,
\begin{equation*}
  ( q _{ n + 1 } - 2 q _n + q _{ n -1 } ) + (\tfrac{1}{2} h \Omega) ^2
  ( q _{ n + 1 } + 2 q _n + q _{ n -1 } ) = h ^2 g ( q _n ) ,
\end{equation*} 
which is only linearly implicit, and hence avoids the difficulty of
solving a nonlinear equation for $ q _{ n + 1 } $.  Now, if we choose
$ \widetilde{ \omega } $ such that $ \tan ( \frac{1}{2} h \widetilde{
  \omega } ) = \frac{1}{2} h \omega $, then this becomes
\begin{equation*}
  ( q _{ n + 1 } - 2 q _n + q _{ n -1 } ) + \tan  ^2 ( \tfrac{1}{2} h
  \widetilde{ \Omega  } )  ( q _{ n + 1 } + 2 q _n + q _{ n -1 } ) = h
  ^2 g ( q _n ) ,
\end{equation*} 
or
\begin{equation*}
  q _{ n + 1 } - 2 \cos ( h \widetilde{ \Omega } ) q _n + q _{ n -1 }
  = h ^2   \cos ^2 ( \tfrac{1}{2} h \widetilde{ \Omega } ) g ( q _n ) .
\end{equation*} 
Hence, the IMEX method can be reframed as a modified trigonometric
method, with the modified frequency $ \widetilde{ \omega } $
satisfying $ \tan ( \frac{1}{2} h \widetilde{ \omega } ) = \frac{1}{2}
h \omega $, and with the filters $ \psi (\xi) = \cos ^2 ( \frac{1}{2}
\xi ) $ and $ \phi = 1 $. In contrast to St\"ormer/Verlet, it is
always possible to solve for this $ \widetilde{ \omega } $, since $
\arctan $ (unlike $ \arcsin $) is defined on the entire real
line. Note that the IMEX method is also symplectic, since it satisfies
the aforementioned condition
\begin{equation*}
  \omega ^{-1} \widetilde{ \omega } \operatorname{sinc} ( h
  \widetilde{ \omega } ) \phi ( h \widetilde{ \omega } ) = \frac{
    \tfrac{1}{2} h \widetilde{ \omega } }{ \tfrac{1}{2} h \omega }
  \operatorname{sinc} ( h \widetilde{ \omega } ) 
  = \frac{ \tfrac{1}{2} \sin ( h \widetilde{ \omega } ) }{ \tan
    (\tfrac{1}{2} h \widetilde{ \omega } ) } = \cos ^2 ( \tfrac{1}{2}
  h \widetilde{ \omega } ) = \psi ( h \widetilde{ \omega } ).
\end{equation*} 
See \citet{StGr2009} for a discussion of IMEX as a splitting method,
which also implies its symplecticity.

\subsection{Modulated Fourier expansion and slow exchange} We have
seen that a modified trigonometric integrator has the same form as a
standard trigonometric integrator with frequency $ \widetilde{ \omega
} $ (modulo the choice of $ \widetilde{ p } _n = \Omega ^{-1}
\widetilde{ \Omega } p _n $ instead of $ p _n $).  Therefore, applying
the modulated Fourier expansion, we get
\begin{equation}
  \label{eqn:modTrigCoefficients}
\begin{aligned}
  \delta _h ^2 y _{ h, 0 } &= g _0 \bigl( y _{ h,0 } , \widetilde{
    \gamma } \widetilde{ \omega } ^{ - 2 } g _1 ( y _{ h, 0 } , 0 )
  \bigr) + \widetilde{ \beta } \frac{ \partial ^2 g _0 }{ \partial x
    _1 ^2 } ( y _{ h,0 }, 0 ) ( z _{ h,1 },
  \overline{z} _{ h, 1 } ) ,\\
  2 i \widetilde{ \omega } \dot{z} _{ h, 1 } &= \widetilde{ \alpha }
  \frac{ \partial g _1 }{ \partial x _1 } ( y _{ h, 0 } , 0 ) z _{ h,
    1 } ,
\end{aligned}
\end{equation}
where
\begin{equation*}
  \widetilde{ \alpha } = \frac{ \psi ( h \widetilde{ \omega } ) \phi (
    h \widetilde{ \omega } ) }{ \operatorname{sinc} (h \widetilde{
      \omega } ) } , \qquad \widetilde{ \beta } = \phi (h \widetilde{ \omega }
  ) ^2 , \qquad \widetilde{ \gamma } = \frac{ \psi ( h \widetilde{
      \omega } ) \phi (
    h \widetilde{ \omega } ) }{ \operatorname{sinc} ^2 ( \frac{1}{2} h
    \widetilde{ \omega } ) } .
\end{equation*} 
Now, if we define $ \widetilde{ I } _j = \frac{1}{2} \lvert
\widetilde{ p } _{ 1, j } \rvert ^2 + \frac{1}{2} \widetilde{ \omega }
^2 \lvert q _{ 1, j } \rvert ^2 $, then we have previously seen that $
\widetilde{ I } _j \approx 2 \widetilde{ \omega } ^2 \lvert z _{1,j}
\rvert ^2 $.  However, we are interested not in the behavior of $
\widetilde{ I } _j $, but in that of the original stiff energies $ I
_j $.  Since $ \widetilde{ p } _{ 1, j } = ( \widetilde{ \omega } /
\omega ) p _{ 1, j } $, we get
\begin{equation*}
  I _j = \frac{1}{2} \lvert p _{ 1 , j } \rvert ^2 + \frac{1}{2}
  \omega ^2 \lvert q _{ 1, j } \rvert ^2 = \frac{ \omega ^2 }{
    \widetilde{ \omega } ^2 } \biggl( \frac{1}{2} \lvert \widetilde{ p
  }_{ 1 , j } \rvert ^2 + \frac{1}{2}
  \widetilde{ \omega } ^2 \lvert q _{ 1, j } \rvert ^2 \biggr) =
  \frac{ \omega ^2 }{ \widetilde{ \omega } ^2 } \widetilde{ I } _j , 
\end{equation*} 
and therefore $ I _j \approx 2 \omega ^2 \lvert z _{ 1, j } \rvert ^2
$.

It follows that, if \eqref{eqn:modTrigCoefficients} is consistent with
\eqref{eqn:coefficients}, then the modified trigonometric integrator
will be consistent for the corresponding energy exchange behavior.  To
compare these, let us rewrite \eqref{eqn:modTrigCoefficients} as
\begin{equation*}
\begin{aligned}
  \delta _h ^2 y _{ h, 0 } &= g _0 \bigl( y _{ h,0 } , \gamma \omega
  ^{ - 2 } g _1 ( y _{ h, 0 } , 0 ) \bigr) + \beta \frac{ \partial ^2
    g _0 }{ \partial x _1 ^2 } ( y _{ h,0 }, 0 ) ( z _{ h,1 },
  \overline{z} _{ h, 1 } ) ,\\
  2 i \omega \dot{z} _{ h, 1 } &= \alpha \frac{ \partial g _1
  }{ \partial x _1 } ( y _{ h, 0 } , 0 ) z _{ h, 1 } ,
\end{aligned}
\end{equation*}
where $ \alpha = ( \omega / \widetilde{ \omega } ) \widetilde{ \alpha
} $, $ \beta = \widetilde{ \beta } $, and $ \gamma = ( \omega /
\widetilde{ \omega } ) ^2 \widetilde{ \gamma } $, i.e.,
\begin{equation*}
  \alpha = \frac{ \omega \psi ( h \widetilde{ \omega } ) \phi (
    h \widetilde{ \omega } ) }{ \widetilde{ \omega }
    \operatorname{sinc} (h \widetilde{ \omega } ) } , \qquad
  \beta = \phi (h \widetilde{ \omega }
  ) ^2 , \qquad \gamma = \frac{ \omega ^2 \psi ( h \widetilde{
      \omega } ) \phi (
    h \widetilde{ \omega } ) }{ \widetilde{ \omega } ^2
    \operatorname{sinc} ^2 ( \frac{1}{2} h \widetilde{ \omega } ) } .
\end{equation*} 
Hence, consistency will require $ \alpha = \beta = \gamma = 1 $.  We
now arrive at the main result of this section.

\begin{theorem}
  \label{thm:IMEX}
  The IMEX method is the unique modified trigonometric integrator
  satisfying $ \alpha = \beta = \gamma = 1 $.
\end{theorem}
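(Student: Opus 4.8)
The plan is to show that the three consistency constants depend on the data $(\psi,\phi,\widetilde{\omega})$ of a modified trigonometric integrator only through the three scalars $\widetilde{\omega}$, $a := \psi(h\widetilde{\omega})$, and $b := \phi(h\widetilde{\omega})$, and then to solve the system $\alpha=\beta=\gamma=1$ for these scalars. Indeed, in the slow block $\widetilde{\Omega}$ vanishes and $\psi(0)=\phi(0)=1$, so \eqref{eqn:modTrig} reduces to leapfrog there regardless of the filters; in the fast block $\widetilde{\Omega}$ acts as $\widetilde{\omega}I$, so the recursion depends only on $\cos(h\widetilde{\omega})$, $a$, and $b$. Hence it suffices to prove that $\alpha=\beta=\gamma=1$ admits a unique admissible triple $(\widetilde{\omega},a,b)$, and that this triple coincides with the IMEX values $\tan(\tfrac12 h\widetilde{\omega})=\tfrac12 h\omega$, $a=\cos^2(\tfrac12 h\widetilde{\omega})$, $b=1$, which one checks satisfy the three conditions by direct substitution.

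To solve the system, write $\theta := \tfrac12 h\widetilde{\omega}$. First, $\beta = b^2 = 1$ forces $b=1$, taking the positive root appropriate to a normalized filter. The key step is then not to use $\alpha=1$ and $\gamma=1$ separately, but to form the quotient $\gamma/\alpha=1$, in which the factors $a$ and $b$ cancel entirely, leaving a relation in $\widetilde{\omega}$ alone:
\begin{equation*}
  \frac{\omega}{\widetilde{\omega}} = \frac{\operatorname{sinc}^2(\tfrac12 h\widetilde{\omega})}{\operatorname{sinc}(h\widetilde{\omega})} = \frac{\tan\theta}{\theta},
\end{equation*}
where the second equality is a routine half-angle simplification. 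Since $\widetilde{\omega} = 2\theta/h$, this is exactly $\tan(\tfrac12 h\widetilde{\omega}) = \tfrac12 h\omega$, the IMEX frequency relation. Substituting this back into $\alpha=1$ and simplifying then gives $a = \cos^2\theta = \cos^2(\tfrac12 h\widetilde{\omega})$, the IMEX filter value.

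It remains to secure genuine uniqueness. The frequency relation $\tan\theta=\tfrac12 h\omega$ has a unique solution on the principal branch $\theta\in(-\tfrac{\pi}{2},\tfrac{\pi}{2})$, because $\tan$ is strictly increasing there and $\arctan$ is defined on all of $\mathbb{R}$; this fixes $\widetilde{\omega}$, and $a$, $b$ are then determined. I expect the main obstacle to be not the algebra (mere half-angle identities) but precisely these admissibility questions. The equation $\beta=1$ also permits $b=-1$, which together with $\alpha=1$ forces $a=-\cos^2\theta$ and yields a formally valid but sign-reversed solution; I would exclude it by the normalization (non-negativity) of the filter $\phi$. Likewise, the non-principal branches of $\arctan$ must be discarded. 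Once these spurious solutions are ruled out, the admissible triple $(\widetilde{\omega},a,b)$ is unique and equals that of IMEX, so by the reduction above the IMEX method is the unique modified trigonometric integrator with $\alpha=\beta=\gamma=1$.
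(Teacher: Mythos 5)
Your proof is correct and takes essentially the same route as the paper's: both reduce $\alpha=\beta=\gamma=1$ to a scalar system in $\bigl(\psi(h\widetilde{\omega}),\phi(h\widetilde{\omega}),\widetilde{\omega}\bigr)$, get $\phi=1$ from $\beta=1$, derive $\tan(\tfrac12 h\widetilde{\omega})=\tfrac12 h\omega$, and conclude $\psi(\xi)=\cos^2(\tfrac12\xi)$ via the same half-angle identities, the only cosmetic difference being that you eliminate the filters first through the quotient $\gamma/\alpha$ whereas the paper solves $\alpha=1$ for $\psi$ and substitutes into $\gamma$. Your admissibility worries are handled slightly differently from (and more explicitly than) the paper's ``clearly'': the $\phi(h\widetilde{\omega})=-1$ root is properly excluded not by non-negativity (which is not assumed) but because the consistency conditions are identities in $h\widetilde{\omega}$ and $\phi$ is continuous with $\phi(0)=1$, while the non-principal $\arctan$ branches need no exclusion at all, since only $\cos(h\widetilde{\omega})$, $\sin(h\widetilde{\omega})$, and the filter values enter the scheme, so every branch yields the identical (IMEX) method.
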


\begin{proof}
  Clearly $ \beta = 1 $ if and only if $ \phi = 1 $.  Substituting
  this into $ \alpha = 1 $ and solving for the filter $ \psi $, we get
  $ \psi ( h \widetilde{ \omega } ) = ( \widetilde{ \omega } / \omega
  ) \operatorname{sinc} ( h \widetilde{ \omega } )$.  Therefore,
  \begin{equation*}
    \gamma = \frac{ \omega \operatorname{sinc} ( h \widetilde{ \omega
      } ) }{ \widetilde{ \omega }
      \operatorname{sinc} ^2 ( \frac{1}{2} h \widetilde{ \omega } ) }
    = \frac{ \omega \operatorname{sinc} ( \frac{1}{2} h \widetilde{
        \omega } ) \cos ( \frac{1}{2} h \widetilde{ \omega } ) }{
      \widetilde{ \omega } \operatorname{sinc} ^2 ( \frac{1}{2} h
      \widetilde{ \omega } ) } = \frac{ \omega \cos ( \frac{1}{2} h
      \widetilde{ \omega } ) }{ \widetilde{ \omega }
      \operatorname{sinc} ( \frac{1}{2} h \widetilde{ \omega } ) } =
    \frac{ \frac{1}{2} h \omega  }{ \tan ( \frac{1}{2} h \widetilde{
        \omega } ) } .
  \end{equation*} 
  Hence, for $ \gamma = 1 $, the modified frequency must satisfy $
  \tan ( \frac{1}{2} h \widetilde{ \omega } ) = \frac{1}{2} h \omega
  $.  Finally, applying this to the prior equation for $ \psi $, we
  get
  \begin{equation*}
    \psi ( h \widetilde{ \omega } ) = \frac{ \widetilde{ \omega }
      \operatorname{sinc} ( h \widetilde{ \omega } ) }{ \omega } =
    \frac{ \sin ( h \widetilde{ \omega })  }{ h \omega } = \frac{2  \sin
      ( \frac{1}{2} h \widetilde{ \omega } ) \cos ( \frac{1}{2} h
      \widetilde{ \omega } ) }{ 2 \tan ( \frac{1}{2} h \widetilde{
        \omega } )  } = \cos ^2 ( \tfrac{1}{2} h \widetilde{ \omega }
    ) .
  \end{equation*} 
  Therefore, $ \alpha = \beta = \gamma = 1 $ holds if and only if $
  \tan ( \frac{1}{2} h \widetilde{ \omega } ) = \frac{1}{2} h \omega
  $, $ \psi (\xi) = \cos ^2 ( \frac{1}{2} \xi ) $, and $ \phi = 1 $,
  which is precisely the IMEX method.
\end{proof}

Achieving consistency thus requires solving three equations (for $
\alpha $, $\beta$, and $\gamma$) in three unknowns ($\psi$, $\phi$,
and $ \widetilde{ \omega } $).  This is impossible for standard
trigonometric integrators, since fixing $ \widetilde{ \omega } =
\omega $ results in an overdetermined system.  However, allowing $
\widetilde{ \omega } $ to be modified introduces the missing degree of
freedom necessary to satisfy all three consistency conditions.

\subsection{Long-time near-conservation of total energy and modified
  oscillatory energy}
\label{sec:nearConservation}

Away from resonances, standard trigonometric integrators nearly
conserve the total energy $ H ( q, p ) $ and stiff oscillatory energy
$ I ( q, p ) $, up to order $ \mathcal{O} (h) $ (\citet[Chapter XIII,
Theorem 7.1]{HaLuWa2006}). In fact, they note that this result can be
refined further: under the same assumptions, trigonometric integrators
nearly conserve the related quantities
\begin{equation*}
  H ( q, p ) - \rho q _1 ^T g _1 (q) , \qquad   J ( q, p ) - \rho q _1
  ^T g _1 (q),
\end{equation*} 
each up to order $ \mathcal{O} ( h ^2 ) $, where
\begin{equation*}
  \rho = \frac{ \psi ( h \omega ) }{ \operatorname{sinc} ^2 (
    \frac{1}{2} h \omega ) } - 1 ,
\end{equation*} 
and where $ J ( q, p ) = I ( q, p ) - q _1 ^T g _1 (q) $ is called the
\emph{modified oscillatory energy}. In particular, we have $ \rho = 0
$ for Gautschi-type methods with $ \psi (\xi) = \operatorname{sinc} ^2
( \frac{1}{2} \xi ) $ (e.g., Methods A and {D} in
\autoref{tab:trigMethods}), so it follows that these methods exhibit
even better long-time energy behavior, with $ H ( q, p ) $ and $ J (
q, p ) $ nearly conserved up to order $ \mathcal{O} (h ^2 ) $. (See
\citet[Chapter XIII, Exercise 8]{HaLuWa2006}.)

We now show that this improved long-time energy behavior also holds
for the IMEX method. Observe that, since IMEX corresponds to a
trigonometric integrator with frequency $ \widetilde{ \omega } $ in
the modified coordinates $ ( q, \widetilde{ p } ) $, it follows that
the total and modified oscillatory energies,
\begin{equation*}
  \widetilde{ H } ( q , \widetilde{ p } ) - \widetilde{ \rho } q _1 ^T
  g _1 (q) , \qquad \widetilde{ J } ( q, \widetilde{ p } ) -
  \widetilde{ \rho } q _1 ^T g _1 (q) ,
\end{equation*} 
are nearly conserved up to order $ \mathcal{O} ( h ^2 ) $. Here, $
\widetilde{ H } $, $ \widetilde{ J } $, and $ \widetilde{ \rho } $ are
defined just as above, with $ \widetilde{ \omega } $ in place of
$\omega$. For the IMEX method, note also that
\begin{equation*}
  \frac{ \psi ( h \widetilde{ \omega } ) }{
    \operatorname{sinc} ^2 ( \frac{1}{2} h \widetilde{ \omega } ) }  = \frac{ \cos ^2 ( \frac{1}{2} h \widetilde{ \omega } ) }{
    \operatorname{sinc} ^2 ( \frac{1}{2} h \widetilde{ \omega } ) }  = \frac{ ( \frac{1}{2} h \widetilde{ \omega } ) ^2 }{ \tan ^2 (
    \frac{1}{2} h \widetilde{ \omega } ) ^2 }  = \frac{ (\frac{1}{2} h
    \widetilde{ \omega } )^2 }{ ( \frac{1}{2} h \omega ) ^2 } = \frac{
    \widetilde{ \omega } ^2 }{ \omega ^2 }  ,
\end{equation*} 
so $ \widetilde{ \rho } = \widetilde{ \omega } ^2 / \omega ^2 - 1
$. The following theorem expresses the true energies $ H ( q, p ) $
and $ J ( q, p ) $ in terms of their modified counterparts $
\widetilde{ H } ( q, \widetilde{ p } ) $ and $ \widetilde{ J } ( q,
\widetilde{ p } ) $, thereby yielding near-conservation of both
quantities up to $ \mathcal{O} ( h ^2 ) $.

Note that the IMEX method avoids the undesirable phenomenon of
resonance instability, since $ h \widetilde{ \omega } $ is bounded
away from nonzero integer multiples of $\pi$ whenever $ h \omega $ is
bounded. (An alternative proof for the stability of this method is
given in \citet{StGr2009}.) Therefore, this convergence result can be
stated without placing non-resonance restrictions on $ h \omega $.

\begin{theorem}
  \label{thm:energy}
  For the IMEX method,
  \begin{align*}
    H ( q, p ) &= \bigl[ \widetilde{ H } ( q, \widetilde{ p } ) -
    \widetilde{ \rho } q _1 ^T g _1 (q) \bigr] - \widetilde{ \rho } J
    ( q, p ) ,\\
    J (q,p) &= \frac{ \omega ^2 }{ \widetilde{ \omega } ^2 } \bigl[
    \widetilde{ J } ( q, \widetilde{ p } ) - \widetilde{ \rho } q _1
    ^T g _1 (q) \bigr] .
  \end{align*} 
  Consequently, both $H$ and $J$ are nearly conserved up to $
  \mathcal{O} ( h ^2 ) $ as $ h \rightarrow 0 $ for any fixed $ h
  \omega $.
\end{theorem}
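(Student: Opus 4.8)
The plan is to separate the proof into a purely algebraic part and an analytic part. The two displayed identities are algebraic relations between the unmodified energies $H$, $J$ and their modified counterparts $\widetilde{H}$, $\widetilde{J}$; once these are established, near-conservation follows immediately by applying the known results for standard trigonometric integrators in the modified coordinates $(q, \widetilde{p})$.

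First I would verify the identities. The essential facts are that $\widetilde{p} _1 = (\widetilde{\omega}/\omega) p _1$ and $\widetilde{p} _0 = p _0$, together with $\lVert \widetilde{\Omega} q \rVert ^2 = \widetilde{\omega} ^2 \lVert q _1 \rVert ^2$ and $\lVert \Omega q \rVert ^2 = \omega ^2 \lVert q _1 \rVert ^2$. Substituting these into the definitions and using $\widetilde{\rho} = \widetilde{\omega} ^2 / \omega ^2 - 1$, a direct computation gives the two intermediate relations $\widetilde{H} ( q, \widetilde{p} ) = H ( q, p ) + \widetilde{\rho} \, I ( q, p )$ and $\widetilde{I} ( q, \widetilde{p} ) = (\widetilde{\omega} ^2 / \omega ^2) \, I ( q, p )$. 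Feeding these into $J = I - q _1 ^T g _1 (q)$ and $\widetilde{J} = \widetilde{I} - q _1 ^T g _1 (q)$ and simplifying with $1 + \widetilde{\rho} = \widetilde{\omega} ^2 / \omega ^2$ yields the two displayed identities; each is a short manipulation.

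The analytic content is imported in the second part. Earlier in the excerpt it was shown that, in the coordinates $(q, \widetilde{p})$, the IMEX method is exactly a standard trigonometric integrator with frequency $\widetilde{\omega}$, whose associated constant is $\widetilde{\rho}$. Hence the refined near-conservation theorem for trigonometric integrators (\citet[Chapter XIII, Theorem 7.1 and Exercise 8]{HaLuWa2006}) applies and shows that $\widetilde{H} ( q, \widetilde{p} ) - \widetilde{\rho} \, q _1 ^T g _1 (q)$ and $\widetilde{J} ( q, \widetilde{p} ) - \widetilde{\rho} \, q _1 ^T g _1 (q)$ are each nearly conserved up to $\mathcal{O}(h ^2)$. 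The second displayed identity then presents $J$ as the fixed scalar multiple $\omega ^2 / \widetilde{\omega} ^2$ of the latter nearly-conserved quantity, so $J$ inherits $\mathcal{O}(h ^2)$ near-conservation; the first displayed identity presents $H$ as the fixed linear combination of $\widetilde{H} ( q, \widetilde{p} ) - \widetilde{\rho} \, q _1 ^T g _1 (q)$ and $J$, both now known to be nearly conserved, so $H$ does as well.

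The main obstacle is not the algebra but the justification of the imported result, and in particular the verification of its hypotheses. The standard near-conservation theorem requires a non-resonance condition on the oscillatory frequency; the crucial point is that for IMEX this condition is automatic in the modified coordinates, since $\tan ( \tfrac{1}{2} h \widetilde{\omega} ) = \tfrac{1}{2} h \omega$ forces $\tfrac{1}{2} h \widetilde{\omega} = \arctan ( \tfrac{1}{2} h \omega ) < \pi/2$, so $h \widetilde{\omega}$ is bounded strictly below $\pi$ and no resonance can occur however large $h \omega$ is. One must also check that the proportionality constants do not degrade the order: for fixed $h \omega$ the product $h \widetilde{\omega}$ is a fixed number, so $\widetilde{\omega} / \omega$ and $\widetilde{\rho}$ are fixed constants as $h \to 0$, and the constant-coefficient combinations above therefore preserve the $\mathcal{O}(h ^2)$ bound. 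This is precisely where the ``for any fixed $h \omega$'' hypothesis enters.
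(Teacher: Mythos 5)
Your proposal is correct and follows essentially the same route as the paper: the same algebraic identities (via $\widetilde{I}(q,\widetilde{p}) = (\widetilde{\omega}^2/\omega^2)\, I(q,p)$ and $\widetilde{\rho} = \widetilde{\omega}^2/\omega^2 - 1$), combined with the Hairer--Lubich--Wanner near-conservation result applied to IMEX viewed as a standard trigonometric integrator with frequency $\widetilde{\omega}$ in the coordinates $(q,\widetilde{p})$. The non-resonance observation and the remark about fixed $h\omega$ that you include inside the proof appear in the paper as discussion immediately preceding the theorem, so the content is identical.
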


\begin{proof}
  The modified Hamiltonian $ \widetilde{ H } ( q, \widetilde{ p } ) $
  only differs from $ H ( q, p ) $ in replacing $ I ( q, p ) $ by $
  \widetilde{ I } ( q, \widetilde{ p } ) = \widetilde{ \omega } ^2 /
  \omega ^2 I ( q, p ) = ( \widetilde{ \rho } + 1 ) I ( q, p )
  $. Therefore,
  \begin{align*}
    H ( q, p ) &= \widetilde{ H } ( q, \widetilde{ p } ) - \widetilde{
      I } ( q, \widetilde{ p } ) + I ( q, p ) \\
    &= \widetilde{ H } ( q, \widetilde{ p } ) - \widetilde{ \rho } I (
    q, p ) \\
    &= \widetilde{ H } ( q, \widetilde{ p } ) - \widetilde{ \rho }
    \bigl[ J ( q, p ) + q _1 ^T g _1 (q) \bigr] \\
    &= \bigl[ \widetilde{ H } ( q, \widetilde{ p } ) - \widetilde{
      \rho } q _1 ^T g _1 (q) \bigr] - \widetilde{ \rho } J ( q, p ) ,
  \end{align*} 
  which proves the first equality. By a similar calculation,
  \begin{align*}
    J ( q, p ) &= \widetilde{ J } ( q, \widetilde{ p } ) - \widetilde{
      I } ( q , \widetilde{ p } ) + I ( q, p ) \\
    &= \bigl[ \widetilde{ J } ( q, \widetilde{ p } ) - \widetilde{
      \rho } q _1 ^T g _1 (q) \bigr] - \widetilde{ \rho } J ( q, p ) ,
  \end{align*} 
  which rearranges to
  \begin{equation*}
    J ( q, p ) = ( \widetilde{ \rho } + 1 ) ^{-1} \bigl[ \widetilde{ J
    } ( q, \widetilde{ p } ) - \widetilde{\rho } q _1 ^T g _1 (q)
    \bigr] = \frac{ \omega ^2 }{ \widetilde{ \omega } ^2 }  \bigl[ \widetilde{ J
    } ( q, \widetilde{ p } ) - \widetilde{\rho } q _1 ^T g _1 (q)
    \bigr] ,
  \end{equation*} 
  yielding the second equality.  Since we have already seen that $
  \widetilde{ J } ( q, \widetilde{ p } ) - \widetilde{ \rho } q _1 ^T
  g _1 (q) $ is nearly conserved up to $ \mathcal{O} ( h ^2 ) $, it
  follows that the same is true of $J(q,p)$. Finally, since $
  \widetilde{ H } ( q, \widetilde{ p } ) - \widetilde{ \rho } q _1 ^T
  g _1 (q) $ and $ J ( q, p ) $ are nearly conserved up to $
  \mathcal{O} ( h ^2 ) $, the first equality implies that so is $ H (
  q, p ) $.
\end{proof}

\section{Numerical experiments}
\label{sec:experiments}

\subsection{The Fermi--Pasta--Ulam problem} Due to its rich multiscale
coupling behavior, a variant of the Fermi--Pasta--Ulam (FPU) problem
has become a popular highly oscillatory test problem for numerical
integrators.  The original FPU problem is due to \citet{FePaUl1955},
while the version considered here is due to \citet{GaGiMaVa1992}, and
appears extensively in \citet[I.5 and XIII]{HaLuWa2006}.

Suppose we have $ 2 \ell $ unit point masses, connected together in
series by alternating weak cubic and stiff linear springs.  Denote the
displacements of the point masses by $ q _1 , \ldots , q _{ 2 \ell }
\in \mathbb{R} $, where the endpoints $ q _0 = q _{ 2 \ell + 1 } = 0 $
are fixed, and let $ p _i = \dot{q} _i $ for $ i = 1 , \ldots, 2 n $.
In these variables, the FPU system has the Hamiltonian
\begin{equation*}
  H ( q, p ) = \frac{1}{2} \sum _{ i = 1 } ^\ell ( p _{ 2 i -1 } ^2 +
  p _{ 2 i } ^2 ) + \frac{ \omega ^2 }{ 4 } \sum _{ i = 1 } ^\ell ( q
  _{ 2 i } - q _{ 2 i -1 } ) ^2 + \sum _{ i = 0 } ^\ell ( q _{ 2 i + 1
  } - q _{ 2 i } ) ^4 .
\end{equation*}
To put this into the standard form of a highly oscillatory problem, we
follow \citet[p.~22]{HaLuWa2006} in defining the coordinate
transformation
\begin{align*} 
  x _{ 0, i } &= \frac{ q _{ 2 i } + q _{ 2 i -1 } }{ \sqrt{ 2 } } ,&
  x _{ 1, i } &= \frac{ q _{ 2 i } - q _{ 2 i -1 } }{ \sqrt{ 2 }
  } ,\\
  y _{ 0,i} &= \frac{ p _{ 2 i } + p _{ 2 i -1 } }{ \sqrt{ 2 } } , & y
  _{ 1, i } &= \frac{ p _{ 2 i } - p _{ 2 i -1 } }{ \sqrt{ 2 } },
\end{align*} 
so that the Hamiltonian becomes
\begin{multline*}
  H ( x, y) = \frac{1}{2} \sum _{ i = 1 } ^\ell ( y _{ 0,i } ^2 + y _{
    1, i } ^2 ) + \frac{ \omega ^2 }{ 2 } \sum _{ i = 1 } ^\ell x _{
    1, i } ^2 \\
  + \frac{ 1 }{ 4 } \biggl[ ( x _{ 0,1 } - x _{ 1,1 } ) ^4 + \sum _{ i
    = 1 } ^{ \ell -1 } ( x _{ 0, i + 1 } - x _{ 1,i + 1 } - x _{ 0,i }
  - x _{ 1, i } ) ^4 + ( x _{0,\ell} + x _{ 1,\ell} ) ^4 \biggr] ,
\end{multline*}
which has the desired form.

Following the numerical examples in \citet{HaLuWa2006}, we consider an
instance of the FPU problem with $ \ell = 3 $, and where the initial
conditions are given by
\begin{equation*}
  x _{ 0,1 } (0) = 1 , \qquad y _{ 0, 1 } (0) = 1 , \qquad x _{ 1, 1 }
  (0) = \omega ^{-1} , \qquad y _{ 1,1 } (0) = 1 ,
\end{equation*} 
with all other initial values set to zero.  In terms of the stiff
energies $ I _j = \frac{1}{2} ( y _{ 1,j } ^2 + \omega ^2 x _{ 1,j }
^2 ) $, where $ j = 1, 2, 3 $, these conditions initialize the FPU
system with $ I _1 = 1 $ and $ I _2 = I _3 = 0 $.  As the system
evolves dynamically, the phenomenon of slow energy exchange causes
this energy to be transferred among $ I _1 $, $ I _2 $, and $ I _3 $,
on the time scale $ \mathcal{O} ( \omega ^{-1} ) $, while the total
stiff energy $ I = I _1 + I _2 + I _3 $ remains nearly constant.

\subsection{Resonance stability and oscillatory energy deviation}
\label{sec:resonance}

\begin{figure}
\subfloat[Method A]{\includegraphics{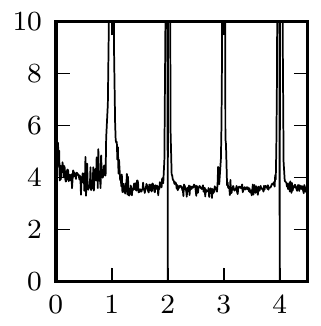}}
\subfloat[Method B]{\includegraphics{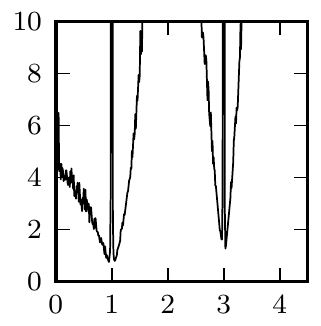}}
\subfloat[Method C]{\includegraphics{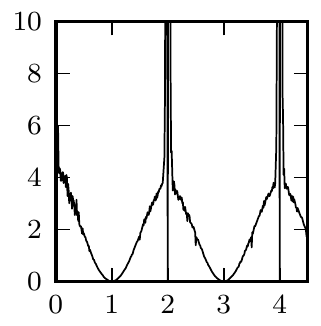}}
\subfloat[Method D]{\includegraphics{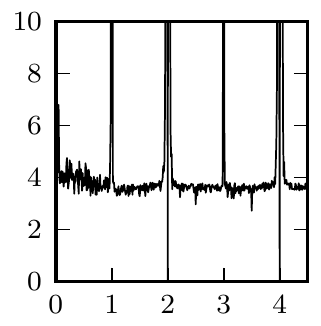}}\\
\subfloat[Method E]{\includegraphics{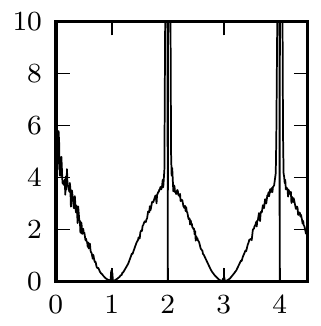}}
\subfloat[Method G]{\includegraphics{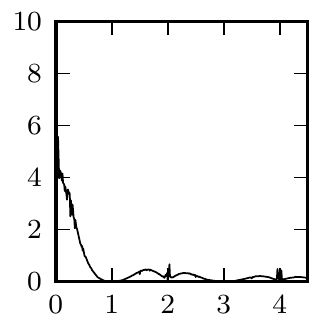}}
\subfloat[IMEX Method]{\includegraphics{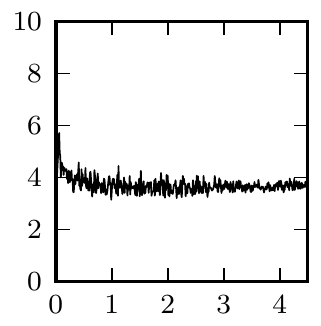}}
\subfloat[Reference]{\includegraphics{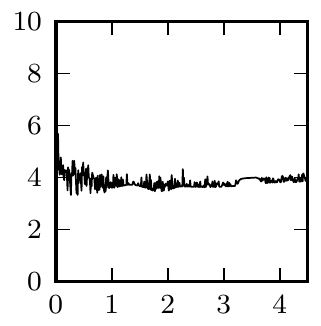}}
\caption{Maximum deviation of scaled oscillatory energy $ \omega I $
  on the time interval $ [0, 1000] $ vs.~$ h \omega / \pi $ ($ h =
  0.02 $).\label{fig:energyDeviation}}
\end{figure}

\autoref{fig:energyDeviation} depicts the maximum deviation in
frequency-scaled oscillatory energy $ \omega I $, over the time
interval $ [0,1000] $, for a range of different frequencies. As
discussed in \autoref{sec:slowExchange}, deviations in $I$ are $
\mathcal{O} ( \omega ^{-1} ) $, whereas those in $ \omega I $ are $
\mathcal{O} (1) $, making the latter more appropriate for comparison
across frequencies. (To our knowledge, the use of $ \omega I $ rather
than $I$ for numerical experiments originated in \citet{ONMc2009}.)
The time step size is fixed at $ h = 0.02 $, while $ h \omega / \pi $
ranges over $ ( 0,4.5] $.

The ``spikes'' seen at nonzero integer values of $ h \omega / \pi $
correspond to resonance instability.  Note that the energy blowup is
particularly severe for Method B (the Deuflhard/impulse method), while
Methods C and E have resonances only at even values of $ h \omega /
\pi $.  Only Method G and the IMEX method display no resonance spikes
at all.

Away from the resonance instabilities, the energy deviation behavior
is also interesting.  For all of the methods considered, $ \omega I $
appears to be $ \mathcal{O} (1) $ away from the resonance spikes.
Indeed, as long as the methods are stable (i.e., the solutions remain
bounded), we have $ I = \mathcal{O} ( \omega ^{-1} ) $, so all of the
methods nearly conserve the adiabatic invariant $I$. (This holds true
whether or not the method is consistent for the individual oscillatory
energies $ I _j $.)

However, the methods behave quite differently with respect to the
magnitude of the deviations in oscillatory energy.  For the reference
solution, the maximum deviation in $ \omega I $ is nearly constant
with respect to $\omega$, with an approximate numerical value of
$4$. Methods B, C, and E display a significant decrease in oscillatory
energy deviation near odd integer values of $ h \omega / \pi $,
indicating that the adiabatic invariant is conserved \emph{too well},
compared to the reference solution.  (This artificial
``anti-resonance'' can be seen as a sort of numerical damping.) This
behavior is even more dramatic for Method G, where the energy
deviation is artificially low for nearly all values of $ h \omega /
\pi $, not just values close to odd integers. Of the methods
considered, the IMEX method is the only one which correctly captures
the magnitude of these deviations in oscillatory energy. This
phenomenon will be revisited and analyzed in \autoref{sec:deviations},
where we will show that this behavior is governed by higher-order
terms in the modulated Fourier expansion.

\subsection{Long-time near-conservation of total energy}

\begin{figure}
\subfloat[Method A]{\includegraphics{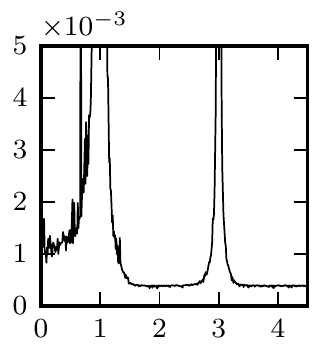}}
\subfloat[Method B]{\includegraphics{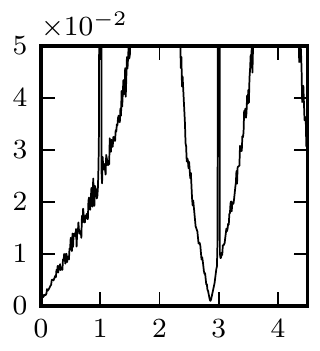}}
\subfloat[Method C]{\includegraphics{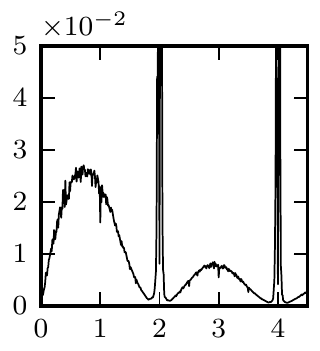}}
\subfloat[Method D]{\includegraphics{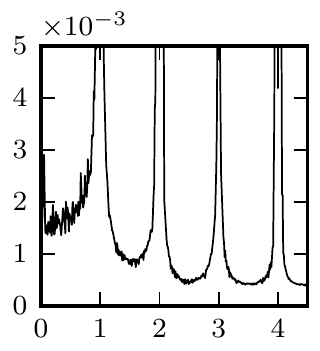}}\\
\subfloat[Method E]{\includegraphics{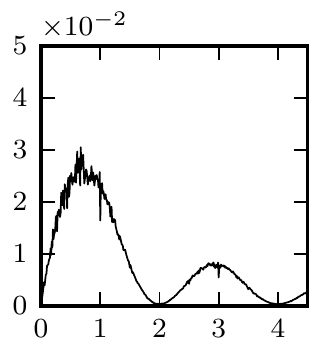}}
\subfloat[Method G]{\includegraphics{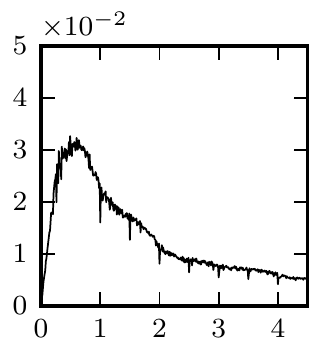}}
\subfloat[IMEX Method]{\includegraphics{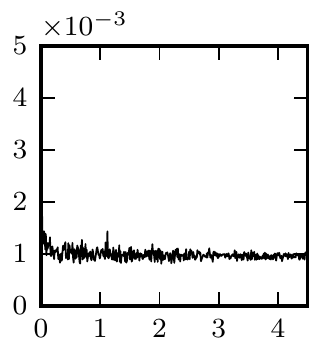}}
\phantom{\subfloat[IMEX Method]{\includegraphics{figs/totalenergyplot_method=i}}
}
\caption{Maximum deviation of total energy on the time interval $ [0,
  1000] $ vs.~$ h \omega / \pi $ ($ h = 0.02
  $). The $y$-axis is scaled 10 times smaller for Methods A,
  {D}, and IMEX due to the smaller energy deviations for these
  methods. \label{fig:totalEnergyDeviation}}
\end{figure}

\begin{figure}
\subfloat[Method A]{\includegraphics{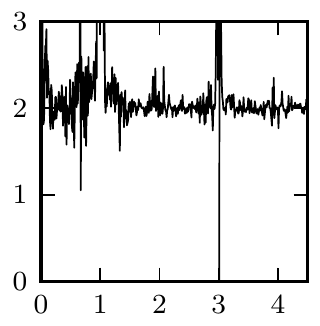}}
\subfloat[Method B]{\includegraphics{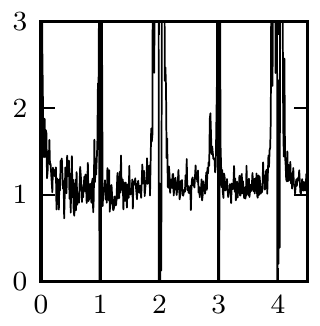}}
\subfloat[Method C]{\includegraphics{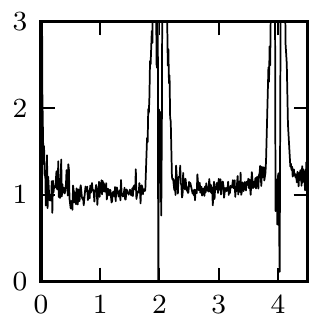}}
\subfloat[Method D]{\includegraphics{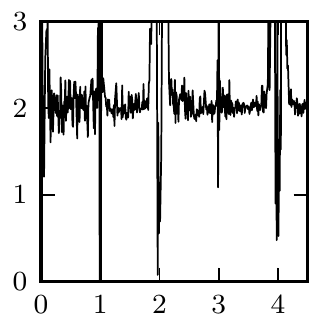}}\\
\subfloat[Method E]{\includegraphics{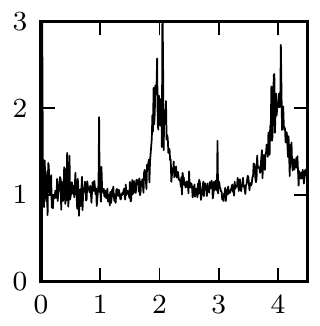}}
\subfloat[Method G]{\includegraphics{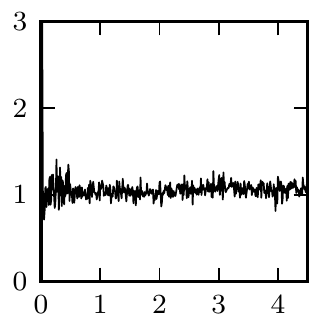}}
\subfloat[IMEX Method]{\includegraphics{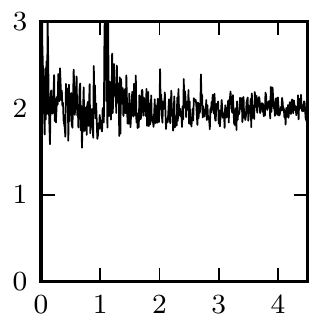}}
\phantom{\subfloat[IMEX Method]{\includegraphics{figs/totalenergyconvergenceplot_method=i}}
}
\caption{Log ratio of the maximum deviations in total energy on the
  the time interval $ [0, 1000] $, for $ h = 0.02 $ and $ h = 0.04 $,
  plotted against $ h \omega / \pi $. Away from resonances, Methods A,
  {D} and IMEX conserve total energy up to $ \mathcal{O} ( h ^2 ) $,
  while the remaining trigonometric integrators only conserve total
  energy up to $ \mathcal{O} ( h ) $. \label{fig:totalEnergyConvergence}}
\end{figure}

\autoref{fig:totalEnergyDeviation} shows the maximum deviation in
total energy (i.e., in the Hamiltonian), over the time interval $
[0,1000] $, for a range of different frequencies. (This is in contrast
to \autoref{fig:energyDeviation}, which depicted only the oscillatory
energy component of the Hamiltonian.) The reference plot is omitted,
as the exact solution preserves total energy exactly. As in
\autoref{fig:energyDeviation}, Methods A--{D} again exhibit ``spikes''
in the energy error at resonant frequencies. Notably, this is not the
case for Method E---despite the fact that it exhibited resonance
spikes for the oscillatory energy alone---nor for Method G or
IMEX. Furthermore, observe that Methods A, {D}, and IMEX conserve
energy much more closely than the other methods (at least away from
resonances), by roughly an order of magnitude. This is consistent with
the discussion in \autoref{sec:nearConservation}, including the result
in \autoref{thm:energy}, which stated that these methods conserve
total energy up to $ \mathcal{O} ( h ^2 ) $, whereas the remaining
methods only do so up to $ \mathcal{O} (h) $.

\autoref{fig:totalEnergyConvergence} illustrates the relationship of
total energy conservation to step size, plotting the log ratio of the
deviation in total energy for $ h = 0.02 $ and $ h = 0.04 $. As
anticipated by the theoretical results in
\autoref{sec:nearConservation}, including \autoref{thm:energy}, we see
that the energy deviations are $ \mathcal{O} ( h ^2 ) $ for Methods A,
{D}, and IMEX, and $ \mathcal{O} ( h ) $ for the remaining methods (at
least away from resonances).  Only the IMEX method exhibits
second-order conservation of total energy, while also remaining free
of resonance spikes.

\subsection{Slow energy exchange}

\begin{figure}
\subfloat[Method A]{\includegraphics{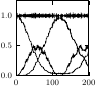}}
\subfloat[Method B]{\includegraphics{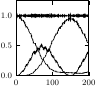}}
\subfloat[Method C]{\includegraphics{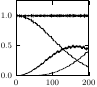}}
\subfloat[Method D]{\includegraphics{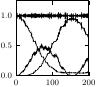}}\\
\subfloat[Method E]{\includegraphics{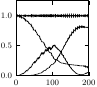}}
\subfloat[Method G]{\includegraphics{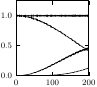}}
\subfloat[IMEX Method]{\includegraphics{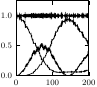}}
\subfloat[Reference]{\includegraphics{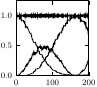}}
\caption{Slow exchange of individual and total oscillatory energies
  vs. time ($ \omega = 50 $, $ h = 0.03
  $).\label{fig:energyExchange1}}
\end{figure}

\begin{figure}
\subfloat[Method A]{\includegraphics{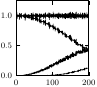}}
\subfloat[Method B]{\includegraphics{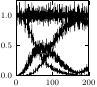}}
\subfloat[Method C]{\includegraphics{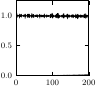}}
\subfloat[Method D]{\includegraphics{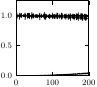}}\\
\subfloat[Method E]{\includegraphics{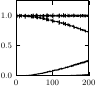}}
\subfloat[Method G]{\includegraphics{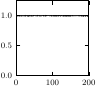}}
\subfloat[IMEX Method]{\includegraphics{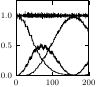}}
\subfloat[Reference]{\includegraphics{figs/energyplot_omega=50.0_reference}}
\caption{Slow exchange of individual and total oscillatory energies
  vs. time ($ \omega = 50 $, $ h = 0.1 $).\label{fig:energyExchange2}}
\end{figure}

\begin{figure}
\subfloat[Method A]{\includegraphics{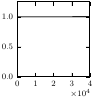}}
\subfloat[Method B]{\includegraphics{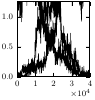}}
\subfloat[Method C]{\includegraphics{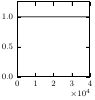}}
\subfloat[Method D]{\includegraphics{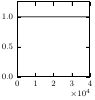}}\\
\subfloat[Method E]{\includegraphics{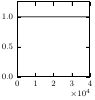}}
\subfloat[Method G]{\includegraphics{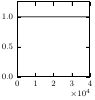}}
\subfloat[IMEX Method]{\includegraphics{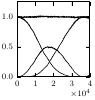}}
\subfloat[Reference]{\includegraphics{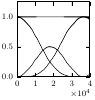}}
\caption{Slow exchange of individual and total oscillatory energies
  vs. time ($ \omega = 10000 $, $ h = 0.1 $).\label{fig:energyExchange3}}
\end{figure}

Figures~\ref{fig:energyExchange1}, \ref{fig:energyExchange2}, and
\ref{fig:energyExchange3} depict the phenomenon of slow energy
exchange for the FPU problem, following similar numerical experiments
in \citet{HaLuWa2006}. Each plot contains four curves, corresponding
to the three stiff energies, $ I _1 , I _2 , I _3 $, along with the
adiabatic invariant $ I = I _1 + I _2 + I _3 \approx 1 $.

In \autoref{fig:energyExchange1}, the parameters $ \omega = 50 $ and $
h = 0.03 $ correspond to a moderate choice of time step size: since $
h \omega / \pi \approx 0.48 $, this is prior to the onset of resonance
instability at nonzero integer values. Methods B, D, and the IMEX
method give qualitatively correct energy exchange behavior on the time
interval $ [0,200] $. By contrast, the exchange occurs too quickly for
Method A, and too slowly for Methods C, E, and G (the latter quite
dramatically).

In \autoref{fig:energyExchange2}, the fast frequency remains $ \omega
= 50 $, but we take a significantly larger time step size of $ h = 0.1
$ (with $ h \omega / \pi \approx 1.59 $). Method B and the IMEX method
still capture the correct rate of energy exchange on the time interval
$ [0,200] $, while for the other methods, the exchange occurs much too
slowly. Notice that we are also beginning to see the effects of
oscillatory energy deviation, as in
\autoref{fig:energyDeviation}. Indeed, the excessive ``noise'' visible
for Method B is due to the wide resonance band at $ h \omega / \pi = 2
$, while the pronounced lack of noise in Method G is due to its
artificially low deviations in oscillatory energy. Only the IMEX
method displays the correct oscillatory energy behavior, capturing
both the rate of exchange and the magnitude of deviations.

Next, in \autoref{fig:energyExchange3}, we depict the behavior of
these methods as they approach their high-frequency limit, keeping $ h
= 0.1 $ but taking $ \omega = 10000 $ (hence $ h \omega / \pi \approx
318 $). Since $\omega$ has been scaled by a factor of $ 200 $ compared
to the previous experiments, the time interval must also be scaled
correspondingly, so we look at energy exchange over the interval $
[0,40000] $.  As before, only Method B and the IMEX method capture the
correct rate of exchange, while for the other methods, the exchange
occurs so slowly that it cannot be seen at all on the time scale
considered.  Method B is again hampered by resonance instability, as
in \autoref{fig:energyExchange2}, which manifests as excess noise in
the oscillatory energy plot. Of the methods considered, only the IMEX
method captures the correct energy behavior in the high-frequency
limit.

\subsection{Global error in slow components}

\begin{figure}
\subfloat[Method A]{\includegraphics{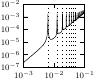}}
\subfloat[Method B]{\includegraphics{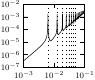}}
\subfloat[Method C]{\includegraphics{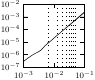}}
\subfloat[Method D]{\includegraphics{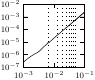}}\\
\subfloat[Method E]{\includegraphics{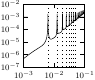}}
\subfloat[Method G]{\includegraphics{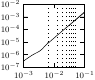}}
\subfloat[IMEX Method]{\includegraphics{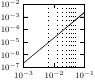}}
\subfloat[St\"ormer/Verlet]{\includegraphics{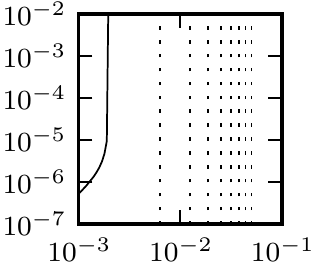}}
\caption{Global error in the slow position $ x _0 $ at the first time
  step after $ t = 1 $ vs. time step size ($ \omega = 1000
  $).\label{fig:slowPosition}}
\end{figure}

\begin{figure}
\subfloat[Method A]{\includegraphics{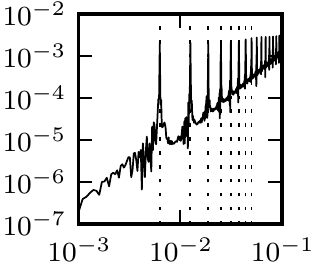}}
\subfloat[Method B]{\includegraphics{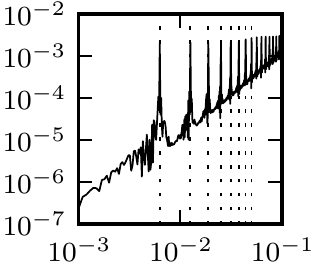}}
\subfloat[Method C]{\includegraphics{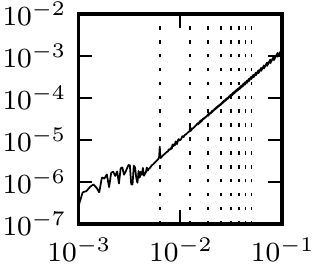}}
\subfloat[Method D]{\includegraphics{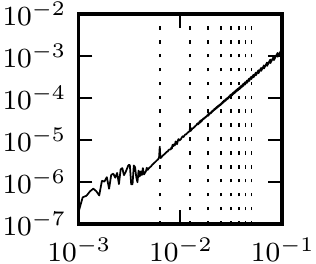}}\\
\subfloat[Method E]{\includegraphics{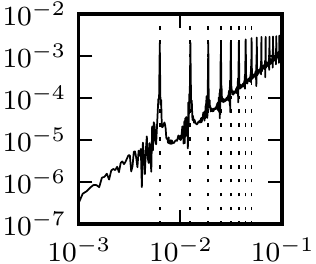}}
\subfloat[Method G]{\includegraphics{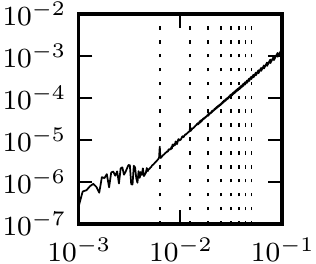}}
\subfloat[IMEX Method]{\includegraphics{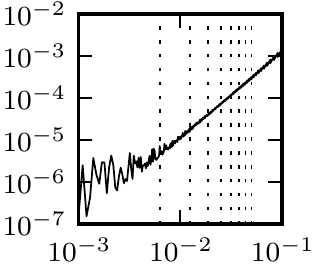}}
\subfloat[St\"ormer/Verlet]{\includegraphics{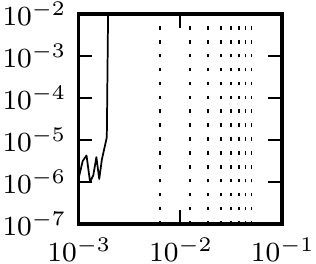}}
\caption{Global error in the slow momentum $ y _0 $ at the first time
  step after $ t = 1 $ vs. time step size ($ \omega = 1000
  $).\label{fig:slowMomentum}}
\end{figure}

Finally, in Figures~\ref{fig:slowPosition} and \ref{fig:slowMomentum},
we investigate the global error in the slow position $ x _0 $ and slow
momentum $ y _0 $, plotted against the time step size $h$. (These are
the only components of real concern: indeed, trigonometric methods are
designed precisely for problems where we are not interested in
resolving the fast oscillations.) For both figures, the error plotted
is the Euclidean distance between the numerical solution and a
reference solution, taken at the first time step after $ t = 1
$. Dotted vertical lines indicate values for which $ h \omega $ is an
integer multiple of $ 2 \pi $, where some of the methods suffer from
resonance instabilities that manifest as spikes in the global
error. Note that the St\"ormer/Verlet blows up due to linear
instability near $ h = 2/ \omega = 2 \times 10 ^{ - 3 } $,
illustrating its unsuitability for this type of highly oscillatory
problem.

Using the modulated Fourier expansion, the analysis of \citet[Chapter
XIII, Theorem 4.1]{HaLuWa2006} shows that each of the trigonometric
methods is second-order, as long as $ h \omega $ is bounded away from
an integer multiple of $\pi$. However, near these resonance points,
the order of accuracy reduces to one unless the filter functions
satisfy certain conditions. (See also \citet{GrHo2006}.)  Among the
standard trigonometric integrators, Methods C, {D}, and G satisfy
these conditions, and hence are second-order uniformly in $ h \omega
$. By contrast, Methods A, B, and E do not satisfy these conditions,
and the resulting error spikes (visible in
Figures~\ref{fig:slowPosition} and \ref{fig:slowMomentum}) lead to a
reduction in their uniform order of accuracy.

By construction, however, the IMEX method always has $ h \widetilde{
  \omega } $ bounded away from nonzero integer multiples of
$\pi$. Indeed, since $ h \widetilde{ \omega } = 2 \arctan (
\frac{1}{2} h \omega ) $, we have $ - \pi < h \widetilde{ \omega } <
\pi $, and $ h \widetilde{ \omega } $ approaches $ \pm \pi $ only in
the limit as $ h \omega $ approaches $ \pm \infty $. It follows that $
h \widetilde{ \omega } $ is bounded away from nonzero integer
multiples of $\pi$ whenever $ h \omega $ is bounded. Hence, comparing
the modulated Fourier coefficients, the argument of \citet{HaLuWa2006}
implies that the global error in the slow components for the IMEX
method is second-order, uniformly in $ h \omega $ on any bounded
region $ \lvert h \omega \rvert \leq M $.

Combined with the previous results, we remark that only Method G and
the IMEX method are uniformly second-order and free of resonance
instabilities. Of these two methods, however, only IMEX captures the
correct stiff energy behavior.

\section{Analysis of oscillatory energy deviations}
\label{sec:deviations}

We observed in \autoref{sec:resonance} that the oscillatory energy $ I
( x, \dot{x} ) $ is nearly conserved, for long times, by all the
methods considered. This is proved in \citet[Chapter XIII, Theorem
7.1]{HaLuWa2006}, where it is shown that the deviations in oscillatory
energy are $ \mathcal{O} (h) $. However, from
\autoref{fig:energyDeviation}, it is also apparent that the magnitude
of these deviations, for certain methods, is very different from the
correct value displayed in the reference solution. This can be
explained by carrying the modulated Fourier expansion to one more
term.

For the exact solution, the system of modulated Fourier coefficients
is known to have a formal invariant $ \mathcal{I} = - i \omega (
\overline{ u } ^T \dot{ u } - u ^T \dot{ \overline{ u } } ) +
\mathcal{O} ( \omega ^{ - 6 } ) $, where $ u = e ^{ i \omega t } z
$. A higher-order expansion of this invariant appears in
\citep[Chapter XIII, Equation 6.12]{HaLuWa2006}, and the $ \mathcal{O}
( \omega ^{ - 6 } ) $ estimate is obtained by observing from
\citep[Chapter XIII, Equation 5.3]{HaLuWa2006} that the remainder is a
product of $- i \omega$ with two additional factors, which are
respectively $ \mathcal{O} ( \omega ^{ - 4 } ) $ and $ \mathcal{O} (
\omega ^{ - 3 } ) $.  Now, with $ \dot{u} = e ^{ i \omega t } (
\dot{z} + i \omega z ) = i \omega e ^{ i \omega t } z + \mathcal{O} (
\omega ^{ - 2 } ) $, this gives
\begin{equation*}
  \mathcal{I} = 2 \omega ^2 \lVert z _1 \rVert ^2 + \mathcal{O} (
  \omega ^{ - 2 } ) .
\end{equation*} 
This should be compared to the oscillatory energy, which is
\begin{equation*}
  I ( x , \dot{x}) = \frac{1}{2} \lVert \dot{ x } _1 \rVert ^2 +
  \frac{1}{2} \omega ^2 \lVert x _1 \rVert ^2 .
\end{equation*} 
To do this, consider the expansions
\begin{align*}
  x _1 &= y _1 + e ^{ i \omega t } z _1 + e ^{ - i \omega t }
  \overline{ z } _1 + \mathcal{O} ( \omega ^{ - 4 } ) \\
  \dot{x} _1 &= \dot{y} _1 + e ^{ i \omega t } ( z _1 + i \omega z _1
  ) + e ^{ - i \omega t } ( \overline{ z } _1 - i \omega \overline{ z
  } _1 ) + \mathcal{O} ( \omega ^{ - 4 } ) .
\end{align*} 
Inserting  $ y _1 = \mathcal{O} ( \omega ^{ - 2 } ) $, $
z _1 = \mathcal{O} ( \omega ^{ - 1 } ) $, $ \dot{z} _1 = \mathcal{O} (
\omega ^{ - 2 } ) $, we get the estimate
\begin{align*}
  I ( x, \dot{x}) &= \frac{1}{2} \omega ^2 \lVert z _1 - \dot{z} _1
  \rVert ^2 + \frac{1}{2} \omega ^2 \lVert y _1 + e ^{ i \omega t } z
  _1 + e ^{ - i \omega t } \overline{ z } _1 \rVert ^2 + \mathcal{O} (
  \omega ^{ - 2 } ) \\
  &= 2 \omega ^2 \bigl( \lVert z _1 \rVert ^2 + y _1 ^T \operatorname{
    Re } ( e ^{ i \omega t } z _1 ) \bigr) + \mathcal{O} ( \omega ^{ -
    4 } ) \\
  &= \mathcal{I} + 2 \omega ^2 y _1 ^T \operatorname{ Re } ( e ^{ i
    \omega t } z _1 ) + \mathcal{O} ( \omega ^{ - 2 } ) .
\end{align*} 
Since $\mathcal{I}$ is (nearly) conserved over long times, the second
term controls the $ \mathcal{O} ( \omega ^{-1} ) $ deviations in the
oscillatory energy $I$. This term contains two fluctuating components,
corresponding to the evolution of $ y _1 = \omega ^{ - 2 } g _1 (y) $
and $ 2 i \omega \dot{z} _1 = g _1 ^\prime (y) z = g _{ 1, 1 } ( y _0,
0 ) z _1 + \mathcal{O} ( \omega ^{ - 2 } ) $.  Since the latter is
controlled by the formal invariant $\mathcal{I}$, it follows that
\begin{equation*}
  \lvert I - \mathcal{I} \rvert \leq \frac{ \sqrt{ \mathcal{I} }}{
      \sqrt{ 2 } } \omega ^{-1} \bigl\lVert g _{ 1, 1 } ( y _0, 0 )
    \bigr\rVert + \mathcal{O} ( \omega ^{ - 2 } ) .
  \end{equation*} 

Repeating the above estimates for modified trigonometric integrators
gives the formal invariant
\begin{equation*}
  \mathcal{I} _h = 2 \omega ^2 \lVert z _{ h , 1 } \rVert +
  \mathcal{O} ( \omega ^{ - 2 } ) ,
\end{equation*} 
for $ h \rightarrow 0 $, $ h \omega $ fixed, which is related to $I$
by
\begin{equation*}
  I ( x _n , \dot{x} _n ) = \mathcal{I} _h + 2 \omega ^2 y _{ h, 1 }
  ^T \operatorname{ Re } ( e ^{ i \omega t } z _{ h , 1 } ) +
  \mathcal{O} ( \omega ^{ - 2 } ) .
\end{equation*} 
From the modulated Fourier expansion, we have
\begin{equation*}
  y _{ h , 1 } = \gamma \phi ( h \omega ) ^{-1} y _1 + \mathcal{O} (
  \omega ^{ - 3 } ) , \qquad \dot{z} _{ h , 1 } = \alpha \dot{z} _1 +
  \mathcal{O} ( \omega ^{ - 2 } ) .
\end{equation*} 
Thus, although the $ z _{ h, 1 } $ factor may have an incorrect
evolution on the $ \mathcal{O} ( \omega ^{-1} ) $ time scale if $
\alpha \neq 1 $, in the neighborhood of a particular solution $ (y,z)
$ on the $ \mathcal{O} (1) $ time scale, it does not affect the
deviations in $I$. Rather, these are controlled by the first factor,
and are therefore correct to leading order if and only if $ \gamma =
\phi $, i.e., $ \omega ^2 \psi ( h \widetilde{ \omega } ) =
\widetilde{ \omega } ^2 \operatorname{sinc} ^2 ( \frac{1}{2} h
\widetilde{ \omega } ) $.

For standard trigonometric integrators, this is true for the
Gautschi-type methods with $ \psi (\xi) = \operatorname{sinc} ^2
(\frac{1}{2} h \omega ) $, i.e., for Method A (with $ \phi = 1 $) and
Method {D} (with $ \phi \neq 1 $). The IMEX method also satisfies this
consistency condition, since $ \gamma = \phi = 1 $. For the remaining
methods, the observed deviations in \autoref{fig:energyDeviation} are
correct up to the factor $ \gamma / \phi $ calculated above. For
example, Methods C and E have $ \psi (\xi) = \operatorname{sinc} ^2
(\xi) $, so
\begin{equation*}
  \frac{ \gamma (\xi) }{ \phi (\xi) } = \frac{ \operatorname{sinc}
    ^2 (\xi) }{ \operatorname{sinc} ^2  (\tfrac{1}{2} \xi ) } = \frac{
    \operatorname{sinc} ^2 ( \tfrac{1}{2} \xi ) \cos ^2 ( \tfrac{1}{2}
    \xi ) }{ \operatorname{sinc} ^2 ( \tfrac{1}{2} \xi ) } = \cos ^2
  (  \tfrac{1}{2} \xi  ) ,
\end{equation*} 
which is clearly visible, in \autoref{fig:energyDeviation}, as
period-$2 \pi $ oscillations in the magnitude of energy deviation.  On
the other hand, for Method G, we have the filter $ \psi (\xi) =
\operatorname{sinc} ^3 (\xi) $, so
\begin{equation*}
  \frac{ \gamma (\xi) }{ \phi (\xi) } = \frac{ \operatorname{sinc} ^3
    (\xi) }{ \operatorname{sinc} ^2 ( \tfrac{1}{2} \xi ) } = \frac{
    \operatorname{sinc} ^3 ( \tfrac{1}{2} \xi ) \cos ^3 ( \tfrac{1}{2}
    \xi ) }{ \operatorname{sinc} ^2 ( \tfrac{1}{2} \xi ) } =
  \operatorname{sinc} ( \tfrac{1}{2} \xi ) \cos ^3 ( \tfrac{1}{2} \xi
  ) ,
\end{equation*} 
which leads to rapid decay in the magnitude of energy deviation. 

\section{Conclusion}

This paper was motivated by the fact that, while conventional
trigonometric integrators have many desirable properties---especially,
with respect to stability, accuracy, and energy behavior---there are
``no-go theorems'' making it impossible for any single integrator to
have these good properties simultaneously. Other work, particularly on
multi-force methods, showed a way around these obstacles, but at the
cost of several nonlinear force evaluations per time step. On the
other hand, the observations of \citet{StGr2009} regarding the IMEX
method suggested that, by modifying the fast frequency, one might find
another way around these obstacles, without suffering the greater
computational cost required by multi-force methods.

By extending the modulated Fourier expansion techniques of
\citet{HaLu2000,HaLuWa2006}, we have shown that, for \emph{modified
  trigonometric integrators}, it is indeed possible to get around
these no-go theorems---and that the IMEX method is in fact the unique
modified trigonometric integrator which correctly models the
multiscale phenomenon of slow energy exchange.  Moreover, the IMEX
method maintains desirable properties with respect to resonance
stability and preservation of adiabatic invariants, while also being
uniformly of second-order accuracy in global error, and does so
without any additional computational cost relative to conventional
trigonometric integrators. Finally, we have shown that while all of
these integrators exhibit near-conservation of oscillatory energy,
only some of them---in particular, the Gautschi-type trigonometric
integrators and the IMEX method---consistently model the magnitude of
deviations in this adiabatic invariant.

\section*{Acknowledgments}

The authors wish to thank the organizers and participants in the 2011
Oberwolfach Workshop on Geometric Numerical Integration---especially
Ernst Hairer, Marlis Hochbruck, and Christian Lubich---for their
valuable feedback on this work. We also thank the anonymous referees
for their helpful comments and suggestions. R.~M.~was supported by a
grant from the Marsden Fund of the Royal Society of New
Zealand. A.~S.~gratefully acknowledges partial travel support from
Reinout Quispel, along with an AMS--Simons travel grant and an
Oberwolfach Leibniz grant.


\begin{thebibliography}{16}
\providecommand{\natexlab}[1]{#1}

\bibitem[{Biesiadecki and Skeel(1993)}]{BiSk1993}
\textsc{J.~J. Biesiadecki and R.~D. Skeel}, \emph{Dangers of multiple time step
  methods}, J. Comput. Phys., 109 (1993), pp. 318--328.

\bibitem[{Debussche and Faou(2009)}]{DeFa2009}
\textsc{A.~Debussche and E.~Faou}, \emph{Modified energy for split-step methods
  applied to the linear {S}chr\"odinger equation}, SIAM J. Numer. Anal., 47
  (2009), pp. 3705--3719.

\bibitem[{Deuflhard(1979)}]{Deuflhard1979}
\textsc{P.~Deuflhard}, \emph{A study of extrapolation methods based on
  multistep schemes without parasitic solutions}, Z. Angew. Math. Phys., 30
  (1979), pp. 177--189.

\bibitem[{Fermi et~al.(1955)Fermi, Pasta, and Ulam}]{FePaUl1955}
\textsc{E.~Fermi, J.~Pasta, and S.~Ulam}, \emph{Studies of nonlinear problems},
  Report LA-1940, Los Alamos National Laboratory, Los Alamos, NM, 1955.

\bibitem[{Galgani et~al.(1992)Galgani, Giorgilli, Martinoli, and
  Vanzini}]{GaGiMaVa1992}
\textsc{L.~Galgani, A.~Giorgilli, A.~Martinoli, and S.~Vanzini}, \emph{On the
  problem of energy equipartition for large systems of the
  {F}ermi-{P}asta-{U}lam type: analytical and numerical estimates}, Phys. D, 59
  (1992), pp. 334--348.

\bibitem[{Garc{\'{\i}}a-Archilla et~al.(1999)Garc{\'{\i}}a-Archilla,
  Sanz-Serna, and Skeel}]{GaSaSk1999}
\textsc{B.~Garc{\'{\i}}a-Archilla, J.~M. Sanz-Serna, and R.~D. Skeel},
  \emph{Long-time-step methods for oscillatory differential equations}, SIAM J.
  Sci. Comput., 20 (1999), pp. 930--963 (electronic).

\bibitem[{Gautschi(1961)}]{Gautschi1961}
\textsc{W.~Gautschi}, \emph{Numerical integration of ordinary differential
  equations based on trigonometric polynomials}, Numer. Math., 3 (1961), pp.
  381--397.

\bibitem[{Grimm and Hochbruck(2006)}]{GrHo2006}
\textsc{V.~Grimm and M.~Hochbruck}, \emph{Error analysis of exponential
  integrators for oscillatory second-order differential equations}, J. Phys. A,
  39 (2006), pp. 5495--5507.

\bibitem[{Hairer and Lubich(2000)}]{HaLu2000}
\textsc{E.~Hairer and C.~Lubich}, \emph{Long-time energy conservation of
  numerical methods for oscillatory differential equations}, SIAM J. Numer.
  Anal., 38 (2000), pp. 414--441 (electronic).

\bibitem[{Hairer et~al.(2006)Hairer, Lubich, and Wanner}]{HaLuWa2006}
\textsc{E.~Hairer, C.~Lubich, and G.~Wanner}, \emph{Geometric numerical
  integration}, vol.~31 of Springer Series in Computational Mathematics,
  Springer-Verlag, Berlin, second ed., 2006. Structure-preserving algorithms
  for ordinary differential equations.

\bibitem[{Hochbruck and Lubich(1999)}]{HoLu1999}
\textsc{M.~Hochbruck and C.~Lubich}, \emph{A {G}autschi-type method for
  oscillatory second-order differential equations}, Numer. Math., 83 (1999),
  pp. 403--426.

\bibitem[{Leimkuhler and Reich(2004)}]{LeRe2004}
\textsc{B.~Leimkuhler and S.~Reich}, \emph{Simulating {H}amiltonian dynamics},
  vol.~14 of Cambridge Monographs on Applied and Computational Mathematics,
  Cambridge University Press, Cambridge, 2004.

\bibitem[{McLachlan and Quispel(2002)}]{McQu2002}
\textsc{R.~I. McLachlan and G.~R.~W. Quispel}, \emph{Splitting methods}, Acta
  Numer., 11 (2002), pp. 341--434.

\bibitem[{O'Neale and McLachlan(2009)}]{ONMc2009}
\textsc{D.~R.~J. O'Neale and R.~I. McLachlan}, \emph{Reconsidering
  trigonometric integrators}, ANZIAM J., 50 (2009), pp. 320--332.

\bibitem[{Stern and Grinspun(2009)}]{StGr2009}
\textsc{A.~Stern and E.~Grinspun}, \emph{Implicit-explicit variational
  integration of highly oscillatory problems}, Multiscale Model. Simul., 7
  (2009), pp. 1779--1794.

\bibitem[{Zhang and Skeel(1997)}]{ZhSk1997}
\textsc{M.~Zhang and R.~D. Skeel}, \emph{Cheap implicit symplectic
  integrators}, Appl. Numer. Math., 25 (1997), pp. 297--302. Special issue on
  time integration.

\end{thebibliography}

\end{document}